\documentclass[11pt]{amsart}
\usepackage{amsfonts,amssymb,amscd,amsmath,enumerate,verbatim,calc,graphicx}

\newcommand{\owari}{\qedsymbol}
%
%------    GENERAL MACROS    -----
%]
% Standard rings and fields, affine and projective space
%
\def\NZQ{\mathbb}               % the font for N,Z,Q,R,C

\def\ZZ{{\NZQ Z}}
\def\RR{{\NZQ R}}

\def\PP{{\NZQ P}}

%
%------------------------------------------------
% Symbols in "Fraktur"
%
\def\frk{\mathfrak}               % font for "Fraktur"

\def\Phi{{\frk N}}
%
%------------------------------------------------
% Small letters in bold
%

\def\eb{{\bold e}}
%---------------------------------------------------
% Greek letters

%
\def\opn#1#2{\def#1{\operatorname{#2}}} % to make operators
%------------------------------------------------
% Numerical invariants of rings, ideals, and modules
%
\opn\chara{char} \opn\length{\ell} \opn\pd{pd} \opn\rk{rk}
\opn\projdim{proj\,dim} \opn\injdim{inj\,dim} \opn\rank{rank}
\opn\depth{depth} \opn\grade{grade} \opn\height{height}
\opn\embdim{emb\,dim} \opn\codim{codim}

\opn\Tr{Tr} \opn\bigrank{big\,rank}
\opn\superheight{superheight}\opn\lcm{lcm}
\opn\trdeg{tr\,deg}%\emph{
\opn\reg{reg} \opn\lreg{lreg} \opn\ini{in} \opn\lpd{lpd}
\opn\size{size}\opn{\mult}{mult}
%------------------------------------------------
% Convexity
%
\opn\aff{aff} \opn\con{conv} \opn\relint{relint} \opn\cok{coker}
\opn\img{Im} \opn\cn{cn} \opn\inte{int} \opn\vol{vol}
\opn\link{link} \opn\star{star}
%------------------------------------------------
% Graded rings and Rees algebras
\opn\gr{gr}

%Tonys commands

\def\Oc{{\mathcal O}}
\def\Qc{{\mathcal Q}}

\def\Pc{{\mathcal P}}

\def\Sc{{\mathcal S}}
\def\Tc{{\mathcal T}}
%
%------------------------------------------------

%
%
\newtheorem{Theorem}{Theorem}[section]
\newtheorem{Lemma}[Theorem]{Lemma}
\newtheorem{Corollary}[Theorem]{Corollary}
\newtheorem{Proposition}[Theorem]{Proposition}
\newtheorem{Remark}[Theorem]{Remark}

\newtheorem{Example}[Theorem]{Example}

\newtheorem{Definition}[Theorem]{Definition}
\newtheorem{Problem}[Theorem]{Problem}

%
% We like the var forms of some greek letters (as taught in German schools)
%
\numberwithin{equation}{section}
%
%           We print on A4 paper
%
\textwidth=15cm \textheight=22cm \topmargin=0.5cm
\oddsidemargin=0.5cm \evensidemargin=0.5cm \pagestyle{plain}

\begin{document}

\title{Almost Gorenstein homogeneous rings and their $h$-vectors}

\author{Akihiro Higashitani}
\thanks{
{\bf 2010 Mathematics Subject Classification:} Primary 13H10; Secondary 13D40, 13H15. \\
\;\;\;\; {\bf Keywords:}
Almost Gorenstein, Cohen--Macaulay, Gorenstein, $h$-vector, homogeneous domain. 
}
\address{Akihiro Higashitani,
Department of Mathematics, Kyoto Sangyo University, 
Motoyama, Kamigamo, Kita-Ku, Kyoto, Japan, 603-8555}
\email{ahigashi@cc.kyoto-su.ac.jp}

\begin{abstract}
In this paper, for the development of the study of almost Gorenstein graded rings, 
we discuss some relations between almost Gorensteinness of Cohen--Macaulay homogeneous rings and their $h$-vectors. 
Concretely, for a Cohen--Macaulay homogeneous ring $R$, we give a sufficient condition for $R$ 
to be almost Gorenstein in terms of the $h$-vector of $R$ (Theorem \ref{suff}) and we also characterize 
almost Gorenstein homogeneous domains with small socle degrees in terms of the $h$-vector of $R$ (Theorem \ref{hvector}). 
Moreover, we also provide the examples of almost Gorenstein homogeneous domains arising from lattice polytopes. 
\end{abstract}

\maketitle

\section{Introduction}

Recently, for the study of a new class of local or graded rings which are Cohen--Macaulay but not Gorenstein, 
{\em almost Gorenstein} local or graded rings were defined and have been studied. 
In this paper, for the further study of almost Gorenstein rings, 
we concentrate on almost Gorenstein homogeneous rings 
and investigate the $h$-vectors of almost Gorenstein homogeneous rings.

Originally, the notion of almost Gorenstein local rings of dimension one was introduced by Barucci and Fr{\"o}berg \cite{BF} 
in the case where the local rings are analytically unramified. 
After this work, Goto, Matsuoka and Phuong \cite{GMP} modified the definition of one-dimensional almost Gorenstein local rings, 
which also works well in the case where the rings are analytically ramified. 
As a further investigation of almost Gorenstein local rings, 
the definition of almost Gorenstein local or graded rings of higher dimension suggested by Goto, Takahashi and Taniguchi \cite{GTT}. 
We refer the reader to \cite{BF, GMP, GTT} for the detailed information on almost Gorenstein rings.

In addition, Matsuoka and Murai \cite{MaMu} have studied the almost Gorenstein Stanley--Reisner rings. 
They introduce the notion ``uniformly Cohen--Macaulay'' and they prove that a simplicial complex $\Delta$ is 
uniformly Cohen--Macaulay if and only if there exists an injection from the Stanley--Reisner ring $k[\Delta]$ 
to its canonical module $\omega_{k[\Delta]}$. This is a necessary condition for $k[\Delta]$ to be almost Gorenstein (with $a$-invariant 0). 
They also characterize the almost Gorenstein$^*$ simplicial complexes of dimension at most 2. See \cite[Section 3.3]{MaMu}.

Inspired by these works (especially \cite{GTT} and \cite{MaMu}), 
we will study higher dimensional almost Gorenstein {\em homogeneous} rings. 
The main goal of this paper is to give a characterization of almost Gorensteinness 
in terms of the Hilbert series, namely, the $h$-vector of the Cohen--Macaulay homogeneous ring.

For a Cohen--Macaulay homogeneous domain and the $h$-vector $(h_0,h_1,\ldots,h_s)$ of $R$, 
it is known by the work of Stanley \cite[Theorem 4.4]{StanleyHF} that 
$R$ is Gorenstein if and only if $h_i=h_{s-i}$ for $i=0,1,\ldots,\lfloor s/2 \rfloor$. (See, also \cite[Corollary 4.4.6]{BH}.) 
This says that the ``symmetry'' of the $h$-vector of $R$ characterizes the Gorensteinness of $R$. 
Hence, it is natural to think of some ``almost symmetry'' of the $h$-vector of $R$ 
may characterize the almost Gorensteinness of $R$. Although to give a complete characterization might be impossible, 
we will prove that a certain almost symmetry of the $h$-vector of a Cohen--Macaulay homogeneous ring 
can be a sufficient condition to be almost Gorenstein (Theorem \ref{suff}). 
In addition, in the case where the ring is a domain and the socle degree is small, 
we will characterize the almost Gorensteinness in terms of its $h$-vector (Theorem \ref{hvector}).

The structure of this paper is as follows. 
In Section \ref{pre}, we recall the definition of almost Gorenstein graded rings 
and prepare some propositions for the discussions later. 
In Section \ref{juubun}, we prove that a certain almost symmetry of the $h$-vector of the Cohen--Macaulay homogeneous ring 
implies the almost Gorensteinness. More precisely, let $R$ be a Cohen--Macaulay homogeneous ring 
and $(h_0,h_1,\ldots,h_s)$ its $h$-vector. We prove that if $h_i=h_{s-i}$ for $i=0,1,\ldots,\lfloor s/2 \rfloor -1$, 
then $R$ is almost Gorenstein (Theorem \ref{suff}). 
In Section \ref{domain}, we characterize the almost Gorenstein homogeneous domain in terms of its $h$-vector. 
More precisely, for a Cohen--Macaulay homogeneous domain $R$, let $(h_0,h_1,\ldots,h_s)$ be the $h$-vector of $R$. 
When $s=2$, the following three conditions are equivalent: (i) $R$ is almost Gorenstein; (ii) $R$ is Gorenstein; (iii) $h_2=1$. 
Moreover, when $s=3$, $R$ is almost Gorenstein if and only if $h_3=1$. (See Theorem \ref{hvector}.) 
Finally, in Section \ref{rei}, we supply some examples of almost Gorenstein homogeneous domains arising from lattice polytopes.

\subsection*{Acknowledgements} 
The authors would like to be grateful to Shiro Goto, Naoyuki Matsuoka and Naoki Taniguchi 
for their helpful comments and instructive discussions. 
The author is partially supported by JSPS Grant-in-Aid for Young Scientists (B) $\sharp$26800015.

\bigskip

\section{Preliminaries}\label{pre}

Let $R$ be a Cohen--Macaulay homogeneous ring of dimension $d$ over an algebraically closed field $k$ with characteristic 0. 
Let $a=a(R)$ be the $a$-invariant of $R$ and let $\omega_R$ be a canonical module of $R$. 
Note that $a(R)=-\min\{ j : (\omega_R)_j \not= 0\}$. 

\subsection{Definitions}

We collect some notation used throughout this paper. 
\begin{itemize}
\item For a graded $R$-module $M$, 
\begin{itemize}
\item let $\mu(M)$ denote the number of elements in a minimal system of generators of $M$ as an $R$-module; 
\item let $e(M)$ denote the multiplicity of $M$; 
\item note that we have in general the inequality 
\begin{align}\label{ookii}
e(M) \geq \mu(M); 
\end{align}
\item let $M(-\ell)$ denote the $R$-module whose underlying $R$-module is the same as that of $M$ 
and whose grading is given by $(M(-\ell))_n=M_{n-\ell}$ for all $n \in \ZZ$; 
\item let $[[ M ]]$ denote the Hilbert series of $M$, i.e., 
$$[[M]]=\sum_{n \in \ZZ}(\dim_k M_n) t^n.$$ 
\end{itemize}
\item We denote the Cohen--Macaulay type of $R$ by $r(R)$. Note that $r(R)=\mu(\omega_R)$. 
\item We say that $(h_0,h_1,\ldots,h_s)$ is the {\em $h$-vector} of $R$ if 
$$[[R]]=\sum_{n \geq 0}(\dim_k R_n) t^n=\frac{h_0+h_1t+\cdots+h_st^s}{(1-t)^d}$$ 
with $h_s \not=0$. It is well known that each $h_i$ is a nonnegative integer (since $R$ is Cohen--Macaulay) and $h_0=1$. 
When $(h_0,h_1,\ldots,h_s)$ is the $h$-vector of $R$, the index $s$ is called the {\em socle degree} of $R$. 
\end{itemize}

Let us recall the definition of the almost Gorenstein {\em graded} ring. 

\begin{Definition}[{\cite[Definition 1.5]{GTT}}]{\em 
We say that a Cohen--Macaulay graded ring $R$ is {\em almost Gorenstein} if there exists an exact sequence 
\begin{align}\label{ex_seq}
0 \rightarrow R \xrightarrow{\phi} \omega_R(-a) \rightarrow C \rightarrow 0
\end{align}
of graded $R$-modules with $\mu(C)=e(C)$, where $\phi$ is an injection of degree 0. 
}\end{Definition}

\subsection{Properties on $C$}

For a while, we consider the condition: 
\begin{align}\label{condition}
\text{there exists an injection $\phi : R \rightarrow \omega_R(-a)$ of degree 0}. 
\end{align}
This is a necessary condition for $R$ to be almost Gorenstein. 
Let $C=\cok(\phi)$. Then $C$ is a Cohen--Macaulay $R$-module of dimension $d-1$ 
if $C\not=0$ (see \cite[Lemma 3.1]{GTT}).

First, we see that the condition \eqref{condition} is satisfied in some cases. For example: 
\begin{Proposition}\label{domeinnobaai}
When $R$ is a domain, $R$ always satisfies the condition \eqref{condition}. 
\end{Proposition}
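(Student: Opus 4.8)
The plan is to construct the injection explicitly, by prescribing the image of $1 \in R$, and then to deduce injectivity from the fact that $R$ is a domain. The first observation is that giving a degree-$0$ homomorphism $\phi : R \to \omega_R(-a)$ of graded $R$-modules is the same as choosing an element $x \in (\omega_R(-a))_0$, the correspondence being $\phi(r)=rx$. Tracking the degree shift, $(\omega_R(-a))_0 = (\omega_R)_{-a}$, and by the very definition $a=a(R)=-\min\{j : (\omega_R)_j \neq 0\}$ this graded piece is nonzero. So I would pick any $0 \neq x \in (\omega_R)_{-a}$ and set $\phi(r)=rx$; this is a homomorphism of degree $0$, and it is not the zero map since $\phi(1)=x \neq 0$.

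The only remaining point is injectivity, and this is where the domain hypothesis is used. The kernel of $\phi$ is the annihilator $\operatorname{Ann}_R(x)$. I would invoke the standard fact that for a Cohen--Macaulay ring the associated primes of $\omega_R$ are precisely the minimal primes $\pp$ with $\dim R/\pp = \dim R$ (see \cite{BH}); when $R$ is a domain this set reduces to $\{(0)\}$, so the set of zerodivisors on $\omega_R$, being the union of its associated primes, is $\{0\}$. In other words $\omega_R$ is torsion-free. Hence for $0 \neq r \in R$ and our chosen $0 \neq x$ one has $rx \neq 0$, so $\operatorname{Ann}_R(x)=0$ and $\phi$ is injective. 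This produces the desired injection and verifies that $R$ satisfies \eqref{condition}.

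I do not expect a serious obstacle here: the argument rests on two routine inputs, namely the nonvanishing of $(\omega_R)_{-a}$, which is immediate from the definition of the $a$-invariant, and the torsion-freeness of $\omega_R$ over the domain $R$. If one prefers to avoid associated primes, an equally clean alternative is a rank argument: both $R$ and $\omega_R(-a)$ are torsion-free of rank $1$ over the domain $R$, and any nonzero homomorphism from a torsion-free rank-$1$ module into a torsion-free module is automatically injective, since otherwise its image would be simultaneously a nonzero torsion module and a submodule of a torsion-free module. The only care required is the bookkeeping with the twist, to confirm that $\phi$ indeed has degree $0$.
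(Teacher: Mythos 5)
Your proof is correct, but the mechanism you use for the key step (injectivity) differs from the paper's. Both arguments share the same skeleton: a degree-$0$ homomorphism $\phi:R\to\omega_R(-a)$ is multiplication by some $x\in(\omega_R(-a))_0=(\omega_R)_{-a}$, this graded piece is nonzero by the definition of the $a$-invariant, and the domain hypothesis forces $\phi$ to be injective. Where you diverge is in how the domain hypothesis is brought to bear. The paper first replaces $\omega_R$ by a canonical \emph{ideal} $I_R\subset R$ (available since a domain is generically Gorenstein, \cite[Section 3.3]{BH}); then $\ker(\phi)=0$ is immediate, because $x\cdot\phi(1)=0$ is an equation between two elements of the domain $R$ itself, with $\phi(1)\neq 0$. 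You instead keep $\omega_R$ abstract and prove it is torsion-free over $R$, via the standard fact that $\operatorname{Ass}_R(\omega_R)$ consists of the minimal primes (which for a domain is just $\{(0)\}$), or alternatively via your rank-$1$ argument. Each route trades one standard input from \cite{BH} for another: the paper needs existence of a canonical ideal but then uses only ring arithmetic, while you avoid choosing an embedding $\omega_R\hookrightarrow R$ at the cost of invoking associated primes of the canonical module. A small point in your favor: you explicitly verify that a nonzero degree-$0$ map exists (from $(\omega_R)_{-a}\neq 0$ and the identification of $\operatorname{Hom}$ of degree $0$ with $(\omega_R(-a))_0$), a step the paper leaves implicit when it says ``consider any $\phi\neq 0$.'' Note also that the paper's formulation shows every nonzero degree-$0$ map is injective, not merely that one exists; this stronger statement is what feeds into condition (b) of Corollary \ref{tokuchou}, though your weaker version fully suffices for the proposition as stated.
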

\begin{proof}
When $R$ is a domain, a canonical module $\omega_R$ can be taken as an ideal $I_R \subset R$ (\cite[Section 3.3]{BH}). %3.3.19
Replace $\omega_R$ by a canonical ideal $I_R$ of $R$. 

Consider any $R$-homomorphism $\phi : R \rightarrow I_R(-a)$ of degree 0 with $\phi \not=0$. 
Take $x \in \ker(\phi)$. Then $\phi(x)=x\cdot\phi(1)=0$. Since $x \in R$ and $\phi(1) \in I_R \subset R$ and $R$ is a domain, 
we see $x=0$ or $\phi(1)=0$. Since $\phi(1) \not=0$, we obtain $x=0$, i.e., $\ker(\phi)=0$, as desired. 
\end{proof}

Next, we would like to describe $\mu(C)$ and $e(C)$ in terms of some invariants on $R$. 
\begin{Proposition}\label{myu-}
Assume that $R$ satisfies \eqref{condition}. Then $\mu(C)=r(R)-1$. 
\end{Proposition}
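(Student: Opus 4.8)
The plan is to compute $\mu(C)$ by reducing modulo the graded maximal ideal $\mm$ of $R$ and invoking Nakayama's lemma, so that $\mu(C)=\dim_k(C/\mm C)$. Tensoring the short exact sequence
$$0 \rightarrow R \xrightarrow{\phi} \omega_R(-a) \rightarrow C \rightarrow 0$$
with $k=R/\mm$ over $R$ and using the right exactness of $-\otimes_R k$ yields the exact sequence
$$k \xrightarrow{\bar\phi} \omega_R(-a)/\mm\omega_R(-a) \rightarrow C/\mm C \rightarrow 0.$$
The middle term has dimension $\mu(\omega_R(-a))=\mu(\omega_R)=r(R)$, since shifting the grading does not affect the number of minimal generators, and the left-hand term is $1$-dimensional. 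Hence it suffices to prove that $\bar\phi$ is injective, for then $\dim_k(C/\mm C)=r(R)-1$.

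The key step, and the only nontrivial point, is therefore to check that $\bar\phi\not=0$, equivalently that $\phi(1)\notin\mm\omega_R(-a)$; this is where the shift by the $a$-invariant is used. First I would record that $a(R)=-\min\{j : (\omega_R)_j\not=0\}$ forces $(\omega_R(-a))_n=(\omega_R)_{n-a}$ to vanish for $n<0$ and to be nonzero for $n=0$, so that $\omega_R(-a)$ is generated in degrees $\geq 0$ and starts exactly in degree $0$. Consequently every nonzero element of $\mm\omega_R(-a)$ has degree $\geq 1$, so $(\mm\omega_R(-a))_0=0$.

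Since $\phi$ has degree $0$, the element $\phi(1)$ lies in $(\omega_R(-a))_0$, and $\phi(1)\not=0$ because $\phi$ is injective. Combining this with $(\mm\omega_R(-a))_0=0$ shows $\phi(1)\notin\mm\omega_R(-a)$, so its class $\bar\phi(1)$ is nonzero and the map $\bar\phi$ is injective. This completes the computation and gives $\mu(C)=r(R)-1$. I do not anticipate a genuine obstacle here: the argument is bookkeeping of degrees together with the standard behaviour of $-\otimes_R k$, and the one place demanding care is ensuring that the degree-$0$ piece of $\omega_R(-a)$ is nonzero, which is precisely guaranteed by the definition of the $a$-invariant.
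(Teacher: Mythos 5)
Your proof is correct and follows essentially the same route as the paper: both arguments hinge on the observation that $\phi(1)$ is a nonzero element of the initial degree-$0$ component of $\omega_R(-a)$, hence lies outside $\mm\,\omega_R(-a)$ and so counts as a minimal generator. You merely spell out, via tensoring with $k$ and graded Nakayama, the bookkeeping that the paper's two-line proof leaves implicit.
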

\begin{proof}
Since $\phi$ is a degree 0 injection and all elements of $(\omega_R(-a))_0$ can be its minimal generators, 
$\phi(1)$ can be an element of minimal generators of $\omega_R(-a)$. Hence, the assertion follows from the exact sequence \eqref{ex_seq}. 
\end{proof}

\begin{Proposition}\label{i-}
Assume that $R$ satisfies \eqref{condition}. Let $(h_0,h_1,\ldots,h_s)$ be the $h$-vector of $R$. Then we have 
\begin{align}\label{ccc}
[[C]] = \frac{\sum_{j=0}^{s-1}((h_s+\cdots+h_{s-j})-(h_0+\cdots+h_j))t^j}{(1-t)^{d-1}}.
\end{align}
In particular, we have $e(C)=\sum_{j=0}^{s-1}((h_s+\cdots+h_{s-j})-(h_0+\cdots+h_j))$. 
\end{Proposition}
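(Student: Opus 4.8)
The plan is to read off $[[C]]$ directly from the exact sequence \eqref{ex_seq}. Since the Hilbert series is additive on short exact sequences of graded modules with degree $0$ maps, \eqref{ex_seq} gives $[[C]] = [[\omega_R(-a)]] - [[R]]$, so the entire computation reduces to determining $[[\omega_R(-a)]]$. For this I would invoke the well-known formula for the Hilbert series of the canonical module of a Cohen--Macaulay graded ring, namely $[[\omega_R]](t) = (-1)^d [[R]](t^{-1})$, which is a standard consequence of local duality (see \cite{BH}).

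First I would unwind this functional equation. Writing $(1-t^{-1})^d = (-1)^d(1-t)^d/t^d$ and clearing denominators in $[[R]]=(h_0+h_1t+\cdots+h_st^s)/(1-t)^d$, one obtains
\[
[[\omega_R]] = \frac{\sum_{i=0}^s h_i t^{d-i}}{(1-t)^d}.
\]
In particular the least degree in which $\omega_R$ is nonzero is $d-s$ (the coefficient of $t^{d-s}$ being $h_s \neq 0$), so $a = a(R) = s-d$. Applying the convention $[[M(-\ell)]]=t^\ell[[M]]$ with $\ell=a$, the grading shift cancels the power of $t$ and yields the reversed $h$-vector:
\[
[[\omega_R(-a)]] = t^{s-d}[[\omega_R]] = \frac{h_s + h_{s-1}t + \cdots + h_0 t^s}{(1-t)^d}.
\]

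Subtracting $[[R]]$ then gives $[[C]] = \bigl(\sum_{j=0}^s (h_{s-j}-h_j)t^j\bigr)/(1-t)^d$. The key observation is that the numerator vanishes at $t=1$, because $\sum_{j=0}^s(h_{s-j}-h_j)=0$; hence $(1-t)$ divides it, and the denominator drops to $(1-t)^{d-1}$, in agreement with $C$ being Cohen--Macaulay of dimension $d-1$. Performing the division explicitly, the coefficient of $t^j$ in the quotient is the partial sum $\sum_{k=0}^j(h_{s-k}-h_k) = (h_s+\cdots+h_{s-j})-(h_0+\cdots+h_j)$, which is precisely the numerator in \eqref{ccc}. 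Finally, since $C$ has dimension $d-1$, its multiplicity equals the value at $t=1$ of the numerator polynomial in \eqref{ccc}, giving the stated expression for $e(C)$.

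I expect the only real subtlety to lie in the bookkeeping around the grading shift and the determination of $a=s-d$: one must apply the duality formula together with the convention $[[M(-\ell)]]=t^\ell[[M]]$ so that $[[\omega_R(-a)]]$ emerges with the reversed $h$-vector over $(1-t)^d$. Once this is settled, checking divisibility by $(1-t)$ and carrying out the polynomial division are entirely routine.
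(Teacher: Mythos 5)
Your proof is correct and follows essentially the same route as the paper: both compute $[[C]] = [[\omega_R(-a)]] - [[R]]$ from the exact sequence, use the reversed $h$-vector formula for the shifted canonical module, and read off the partial sums $(h_s+\cdots+h_{s-j})-(h_0+\cdots+h_j)$ after dividing by $(1-t)$. The only cosmetic differences are that you derive $[[\omega_R(-a)]]$ from the duality functional equation rather than citing \cite[Corollary 4.4.6]{BH} directly, and you justify the drop in the denominator by noting the numerator vanishes at $t=1$, whereas the paper invokes the Cohen--Macaulayness of $C$ to write $[[C]]$ with denominator $(1-t)^{d-1}$ from the start.
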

\begin{proof}
We know that $[[ R ]]=\sum_{j=0}^sh_jt^j/(1-t)^d$ and it is also known (e.g., see \cite[Corollary 4.4.6]{BH}) that 
\begin{align*}
[[ \omega_R(-a)]]=\frac{\sum_{j=0}^s h_{s-j}t^j}{(1-t)^d}. 
\end{align*}
From the exact sequence \eqref{ex_seq}, we obtain $[[C]]=[[\omega_R(-a)]]-[[R]]$. 
On the other hand, since $M$ is a Cohen--Macaulay module of dimension $d-1$, 
$[[C]]$ looks like $\sum_{j \geq 0}h_j't^j/(1-t)^{d-1}$, 
where each $h_j'$ is a nonnegative integer. Hence we obtain the equality 
$$\frac{\sum_{j=0}^s (h_{s-j}-h_j)t^j}{(1-t)^d}=\frac{\left(\sum_{j \geq 0}h_j't^j\right)(1-t)}{(1-t)^d}.$$ 
This implies that $h_j'=(h_s+\cdots+h_{s-j})-(h_0+\cdots+h_j)$ for $j=0,1,\ldots,s$, as required. 
\end{proof}

From this proposition, we obtain: 
\begin{Corollary}\label{prop1}
Assume that $R$ satisfies \eqref{condition}. Let $(h_0,h_1,\ldots,h_s)$ be the $h$-vector of $R$. Then we have the inequality 
$$h_s+\cdots+h_{s-j} \geq h_0+\cdots+h_j$$ for each $j=0,1,\ldots,\lfloor s/2 \rfloor$, 
and $R$ is Gorenstein if and only if all the equalities hold, namely, $h_j=h_{s-j}$ for each $j=0,1,\ldots,\lfloor s/2 \rfloor$. 
\end{Corollary}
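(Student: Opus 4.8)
The plan is to deduce Corollary \ref{prop1} directly from the Hilbert-series formula for the cokernel $C$ established in Proposition \ref{i-}. First I would recall that Proposition \ref{i-} gives
\begin{align*}
[[C]] = \frac{\sum_{j=0}^{s-1}\bigl((h_s+\cdots+h_{s-j})-(h_0+\cdots+h_j)\bigr)t^j}{(1-t)^{d-1}},
\end{align*}
and that $C$, when nonzero, is a Cohen--Macaulay module of dimension $d-1$ (by \cite[Lemma 3.1]{GTT}). The key structural fact I would exploit is that the numerator coefficients $h_j' = (h_s+\cdots+h_{s-j})-(h_0+\cdots+h_j)$ are precisely the $h$-vector entries of the Cohen--Macaulay module $C$, hence are nonnegative integers. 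This immediately yields the inequality $h_s+\cdots+h_{s-j}\geq h_0+\cdots+h_j$ for every $j$, and in particular for $j=0,1,\ldots,\lfloor s/2\rfloor$.

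For the Gorenstein characterization I would argue that $R$ is Gorenstein exactly when $r(R)=1$. By Proposition \ref{myu-} we have $\mu(C)=r(R)-1$, so $R$ is Gorenstein if and only if $\mu(C)=0$, i.e.\ $C=0$. From the Hilbert-series formula, $C=0$ is equivalent to the vanishing of all numerator coefficients $h_j'=0$ for $j=0,\ldots,s-1$, which reads $h_s+\cdots+h_{s-j}=h_0+\cdots+h_j$. Then I would observe that this telescoping system of equalities is equivalent to the pointwise symmetry $h_j=h_{s-j}$: taking successive differences of consecutive equalities converts the partial-sum equalities into the term-by-term relations, and conversely symmetry of the $h$-vector forces all the partial sums to match. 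The range $j=0,1,\ldots,\lfloor s/2\rfloor$ suffices because the relation $h_j=h_{s-j}$ is symmetric in $j\leftrightarrow s-j$, so equalities for the first half automatically cover the whole vector.

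The only point requiring a little care is the bookkeeping in translating between the partial-sum equalities (indexed up to $s-1$) and the symmetry relations (indexed up to $\lfloor s/2\rfloor$). Concretely, setting $S_j = h_0+\cdots+h_j$ and $T_j = h_s+\cdots+h_{s-j}$, the difference $h_j'-h_{j-1}' = (T_j - T_{j-1}) - (S_j - S_{j-1}) = h_{s-j}-h_j$, so the condition that all $h_j'$ vanish is equivalent to $h_{s-j}=h_j$ for all $j$ together with $h_0'=h_s-h_0=0$. I expect this indexing reconciliation to be the main obstacle, though it is routine; everything else follows formally from Propositions \ref{myu-} and \ref{i-}. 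This recovers Stanley's classical symmetry criterion for Gorensteinness as a byproduct of the almost-Gorenstein machinery.
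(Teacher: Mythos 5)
Your proof is correct and follows essentially the same route as the paper: it deduces the inequalities from the Hilbert-series formula of Proposition \ref{i-} together with the Cohen--Macaulayness of $C$, and characterizes Gorensteinness via $C=0$. The extra details you supply (identifying $C=0$ with $\mu(C)=r(R)-1=0$ via Proposition \ref{myu-}, and the telescoping argument converting partial-sum equalities into $h_j=h_{s-j}$) are exactly the steps the paper leaves implicit.
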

\begin{proof}
The above inequality directly follows from \eqref{ccc} and the Cohen--Macaulayness of $C$. 
Moreover, since $R$ is Gorenstein if and only if $C=0$, the second assertion also follows. 
\end{proof}

\begin{Remark}{\em 
Corollary \ref{prop1} is just an analogy of Stanley's inequality for the $h$-vectors of 
semi-standard Cohen--Macaulay domains (\cite[Theorem 2.1]{StanleyCMD}). 
In the case of Stanley--Reisner ring of a uniformly Cohen--Macaulay simplicial complex, 
this inequality is also shown in \cite[Proposition 2.7]{MaMu}. 
}\end{Remark}

\begin{Corollary}\label{tokuchou}
The following four conditions are equivalent: 
\begin{itemize}
\item[(a)] there exists an injection $\phi : R \rightarrow \omega_R(-a)$ of degree 0 such that $C=\cok(\phi)$ satisfies $\mu(C)=e(C)$, 
namely, $R$ is almost Gorenstein; 
\item[(b)] every injection $\phi : R \rightarrow \omega_R(-a)$ of degree 0 satisfies $\mu(C)=e(C)$;  
\item[(c)] we have 
\begin{align*}
r(R)-1=\sum_{j=0}^{s-1}((h_s+\cdots+h_{s-j})-(h_0+\cdots+h_j)); 
\end{align*}
\item[(d)] we have 
\begin{align}\label{hutousiki}
\dim_k (\omega_R \otimes k)_{-a+j} = (h_s+\cdots+h_{s-j})-(h_0+\cdots+h_j) 
\end{align}
for each $j=1,\ldots,s-1$. 
\end{itemize}
In particular, $\phi$ does not matter for the almost Gorensteinness of $R$. 
\end{Corollary}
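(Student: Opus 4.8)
The plan is to prove the equivalence of (a), (b), (c), (d) by establishing the cycle of implications that passes through the numerical characterization in (c), which is the conceptual heart of the statement.
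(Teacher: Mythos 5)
Your proposal is not a proof; it is only an announcement of a strategy. Saying you will ``establish the cycle of implications that passes through (c)'' supplies none of the mathematical content, and every substantive step is missing. Concretely: the equivalence of (a) and (b) is the nontrivial claim that almost Gorensteinness does not depend on the choice of $\phi$, and this requires knowing that both $\mu(C)$ and $e(C)$ are invariants of $R$ alone. The paper gets this from Proposition \ref{myu-} (since $\phi$ has degree $0$, the image $\phi(1)$ is part of a minimal generating set of $\omega_R(-a)$, so $\mu(C)=r(R)-1$ for \emph{every} such $\phi$) and Proposition \ref{i-} (the Hilbert series of $C$ is $[[\omega_R(-a)]]-[[R]]$, so $e(C)=\sum_{j=0}^{s-1}((h_s+\cdots+h_{s-j})-(h_0+\cdots+h_j))$ independently of $\phi$). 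Once these are cited, (a) $\Leftrightarrow$ (b) $\Leftrightarrow$ (c) is immediate; without them you have no way to start the cycle, because (a) $\Rightarrow$ (b) is exactly the point at issue.

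The equivalence with (d) also needs an argument you have not sketched. The direction (d) $\Rightarrow$ (c) uses $r(R)=\sum_{j=0}^{s-1}\dim_k(\omega_R\otimes k)_{-a+j}$ and $h_s=\dim_k(\omega_R\otimes k)_{-a}$, so that summing the equalities in (d) over $j=1,\ldots,s-1$ gives (c). The direction (c) $\Rightarrow$ (d) is the delicate one: from the exact sequence $0\to R\to\omega_R(-a)\to C\to 0$ tensored with $k$ one gets $(\omega_R\otimes k)_{-a+j}\cong(C\otimes k)_j$ for $j\geq 1$, and the general bound $\dim_k(M\otimes k)_j\leq h_j'$ for a Cohen--Macaulay graded module $M$ with $[[M]]=\sum_j h_j't^j/(1-t)^e$ yields the termwise inequality
\begin{align*}
\dim_k(\omega_R\otimes k)_{-a+j}\leq (h_s+\cdots+h_{s-j})-(h_0+\cdots+h_j).
\end{align*}
Only then does the equality of the sums in (c) force equality term by term, which is (d). None of these inequalities, isomorphisms, or invariance statements appear in your proposal, so as it stands there is no proof to evaluate.
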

\begin{proof}
First, we observe that all the elements of $\omega_R(-a)$ of degree 0 can be an element of minimal system of generators of $\omega_R(-a)$. 
Thus, by Proposition \ref{myu-} and Proposition \ref{i-}, we obtain the equivalence of (a) and (b) and (c). 

Next, the implication (d) $\Rightarrow$ (c) easily follows from the facts 
$r(R)=\sum_{j=0}^{s-1} \dim_k (\omega_R \otimes k)_{-a+j}$ and $h_s=\dim_k (\omega_R \otimes k)_{-a}$. 

Finally, consider the implication (c) $\Rightarrow$ (d). From the exact sequence \eqref{ex_seq}, we obtain the exact sequence 
$R \otimes k \rightarrow \omega_R(-a) \otimes k \rightarrow C \otimes k \rightarrow 0.$ 
Since $(R \otimes k)_j = 0$ for $j \geq 1$, by taking $(-)_j$ with $j \geq 1$ for this sequence, we obtain the isomorphism 
$$(\omega_R \otimes k)_{-a+j} \cong (C \otimes k)_j$$ for $j \geq 1$. 
In general, for a graded $R$-module $M$ of dimension $e$, we see that $\dim_k (M \otimes k)_j \leq h_j'$ if $[[M]]=\sum_{j \in \ZZ}h_j't^j/(1-t)^e$. 
Hence, we obtain the inequality $$\dim_k(\omega_R \otimes k)_{-a+j} \leq (h_s+\cdots+h_{s-j})-(h_0+\cdots+h_j)$$ 
for each $j=1,\ldots,s-1$ by Proposition \ref{i-}. 
Therefore, if $\dim_k(\omega_R \otimes k)_{-a+j} < (h_s+\cdots+h_{s-j})-(h_0+\cdots+h_j)$ for some $j$, 
since $r(R)=\sum_{j=0}^{s-1} \dim_k (\omega_R \otimes k)_{-a+j}$ and $h_s=\dim_k (\omega_R \otimes k)_{-a}$, 
we conclude that $r(R)-1 < \sum_{j=0}^{s-1} ((h_s+\cdots+h_{s-j})-(h_0+\cdots+h_j))$, as desired. 
\end{proof}

\bigskip

\section{A sufficient condition to be almost Gorenstein in terms of $h$-vectors}\label{juubun}

Let $R$ be a Cohen--Macaulay homogeneous ring of dimension $d$ satisfying the condition \eqref{condition} 
and let $(h_0,h_1,\ldots,h_s)$ be the $h$-vector of $R$. One of the main results of this paper is the following: 
\begin{Theorem}\label{suff}
If $h_i=h_{s-i}$ for $i=0,1,\ldots, \lfloor s/2 \rfloor -1$, then $R$ is almost Gorenstein. 
\end{Theorem}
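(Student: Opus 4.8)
The plan is to verify the almost Gorenstein condition through the numerical criterion of Corollary \ref{tokuchou}, i.e.\ to show that the cokernel $C=\cok(\phi)$ of an injection $\phi\colon R\to\omega_R(-a)$ (which exists by \eqref{condition}) satisfies $\mu(C)=e(C)$. By the general inequality \eqref{ookii} we always have $\mu(C)\le e(C)$, so the whole task reduces to producing the reverse inequality $\mu(C)\ge e(C)$. I would obtain this by computing $[[C]]$ explicitly from Proposition \ref{i-} and observing that, under the symmetry hypothesis, the Hilbert series of $C$ collapses onto a single degree.

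Concretely, Proposition \ref{i-} gives $[[C]]=\sum_{j=0}^{s-1}h_j' t^j/(1-t)^{d-1}$ with $h_j'=\sum_{i=0}^{j}(h_{s-i}-h_i)$. Setting $k=\lfloor s/2\rfloor$, the hypothesis $h_i=h_{s-i}$ for $i=0,\dots,k-1$ kills the difference $h_{s-i}-h_i$ for every $i$ outside the two central indices $i=k,k+1$, and at those two indices the differences are $h_{k+1}-h_k$ and $h_k-h_{k+1}$. Telescoping the partial sums then forces $h_j'=0$ for all $j\ne k$ and $h_k'=h_{k+1}-h_k$, so that $[[C]]=(h_{k+1}-h_k)\,t^{k}/(1-t)^{d-1}$. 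I would dispatch the even case at the outset: for $s$ even the hypothesis is literally the symmetry criterion of Corollary \ref{prop1}, whence $R$ is Gorenstein (and the same computation gives $C=0$), so the content lies entirely in the odd case.

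From this concentrated Hilbert series two facts fall out immediately: evaluating the numerator at $t=1$ gives $e(C)=h_{k+1}-h_k$, while the expansion in low degrees gives $C_n=0$ for $n<k$ and $\dim_k C_k=h_{k+1}-h_k$. Since $C$ vanishes below degree $k$, every element of $C_k$ must occur in a minimal system of generators, so $\mu(C)\ge\dim_k C_k=h_{k+1}-h_k=e(C)$. Combining with $\mu(C)\le e(C)$ yields $\mu(C)=e(C)$, and hence $R$ is almost Gorenstein; if $h_{k+1}=h_k$ then $C=0$ and $R$ is even Gorenstein.

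The only step requiring genuine care is the telescoping/cancellation that isolates a single surviving coefficient of the $h$-vector of $C$: one must track the parity of $s$ and the boundary indices so that exactly the middle term survives. Once the $h$-vector of $C$ is known to sit in a single degree, the passage to $\mu(C)=e(C)$ is forced by the lowest-degree generators together with \eqref{ookii}, and this is precisely the point at which the almost-symmetry hypothesis is consumed.
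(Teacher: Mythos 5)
Your proof is correct and follows essentially the same route as the paper: the even case is reduced to Gorensteinness via the symmetry criterion, and for odd $s$ Proposition \ref{i-} shows that $[[C]]$ is concentrated in the single degree $(s-1)/2$, which forces $\mu(C)=e(C)$. The only differences are cosmetic: you spell out the step the paper dismisses as ``easy to see'' (lowest-degree generators plus \eqref{ookii}), and you cite Corollary \ref{prop1} rather than Theorem \ref{symm} for the even case, which is in fact the more accurate reference since Theorem \ref{symm} assumes $R$ is a domain while Corollary \ref{prop1} only needs the standing hypothesis \eqref{condition}.
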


This theorem should be compared with the following Stanley's theorem: 
\begin{Theorem}[{\cite[Theorem 4.4]{StanleyHF}}]\label{symm}
Assume that $R$ is a domain. 
Then $R$ is Gorenstein if and only if $h_i=h_{s-i}$ for $i=0,1,\ldots, \lfloor s/2 \rfloor$. 
\end{Theorem}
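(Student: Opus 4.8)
The plan is to obtain the statement as an immediate consequence of the machinery already assembled in Section \ref{pre}, rather than by a fresh computation. Since $R$ is assumed to be a domain, Proposition \ref{domeinnobaai} tells us that $R$ automatically satisfies condition \eqref{condition}; that is, we may fix a degree-$0$ injection $\phi\colon R\to\omega_R(-a)$ and set $C=\cok(\phi)$. Once \eqref{condition} is in force, Corollary \ref{prop1} applies directly and already records the desired equivalence: $R$ is Gorenstein if and only if $h_j=h_{s-j}$ for every $j=0,1,\ldots,\lfloor s/2\rfloor$. So the proof should amount to invoking these two results in sequence, the domain hypothesis entering only to supply \eqref{condition}.

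If one wishes to spell out why Corollary \ref{prop1} yields the equivalence, I would argue through the vanishing of $C$. First I would record the standard identification of Gorensteinness with cyclicity of the canonical module: $R$ is Gorenstein exactly when its Cohen--Macaulay type $r(R)=\mu(\omega_R)$ equals $1$. Combining this with Proposition \ref{myu-}, which gives $\mu(C)=r(R)-1$, shows that $R$ is Gorenstein if and only if $\mu(C)=0$, i.e.\ if and only if $C=0$. Next I would read off the numerator of the Hilbert series \eqref{ccc} furnished by Proposition \ref{i-}: its $j$-th coefficient is $(h_s+\cdots+h_{s-j})-(h_0+\cdots+h_j)$. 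Hence $C=0$ holds precisely when all these coefficients vanish, i.e.\ when $h_s+\cdots+h_{s-j}=h_0+\cdots+h_j$ for every $j$, and a telescoping argument turns this system of partial-sum equalities into the termwise symmetry $h_j=h_{s-j}$ for $j=0,1,\ldots,\lfloor s/2\rfloor$.

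The only ingredient lying genuinely outside the stated propositions is the characterization $R$ Gorenstein $\iff$ $\omega_R$ cyclic $\iff r(R)=1$; in particular the implication $C=0\Rightarrow R$ Gorenstein needs the remark that if $C=0$ then $\phi$ is an isomorphism, so $\omega_R(-a)\cong R$ and $R$ is Gorenstein. I expect the subtlest point---and the place where this framework diverges from naive intuition---to be the role of the domain hypothesis. The implication ``symmetric $h$-vector $\Rightarrow$ Gorenstein'' is false for general Cohen--Macaulay homogeneous rings, yet here it is recovered. The reason is that it is really the weaker condition \eqref{condition} (the existence of a degree-$0$ embedding of $R$ into $\omega_R(-a)$) that drives the argument, and being a domain is simply a convenient way to guarantee \eqref{condition}; a non-Gorenstein ring with a symmetric $h$-vector must fail \eqref{condition} and hence cannot be a domain.
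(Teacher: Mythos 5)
Your proposal is correct; the one structural wrinkle is that the paper contains no proof of Theorem \ref{symm} to compare against --- it is imported verbatim as Stanley's theorem \cite[Theorem 4.4]{StanleyHF}. What you have done is re-derive the citation from the paper's own Section \ref{pre} machinery, and the derivation is sound and non-circular. Proposition \ref{domeinnobaai} supplies \eqref{condition} from the domain hypothesis, and Corollary \ref{prop1} then literally states the equivalence; unwinding it as you do, Gorensteinness of $R$ is equivalent to $C=0$ (if $R$ is Gorenstein then $\omega_R(-a)\cong R$, and a degree-$0$ injection $R\to R$ is multiplication by a nonzero scalar of $R_0=k$, hence an isomorphism; conversely $C=0$ makes $\phi$ an isomorphism, so $\omega_R\cong R(a)$ and $r(R)=1$), while Proposition \ref{i-} shows $C=0$ exactly when all coefficients $h_j'=(h_s+\cdots+h_{s-j})-(h_0+\cdots+h_j)$ vanish, and your telescoping $h_j'-h_{j-1}'=h_{s-j}-h_j$ converts this into termwise symmetry. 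One point worth making explicit, since the theorem only hypothesizes symmetry for $j\le\lfloor s/2\rfloor$: these equalities already give $h_j=h_{s-j}$ for \emph{all} $j=0,\ldots,s$ (each pair $(j,s-j)$ is covered), hence every $h_j'$ vanishes; alternatively one can check directly that $h_j'=h_{s-1-j}'$ holds automatically. As to circularity: the only external input behind Proposition \ref{i-} is the Hilbert series $[[\omega_R(-a)]]$, which comes from local duality (\cite[Corollary 4.4.6]{BH}) and does not presuppose Stanley's Gorenstein criterion, so your argument is a genuine proof --- and in fact essentially Stanley's original one, which likewise embeds the domain $R$ into its canonical ideal. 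Your closing observation is also the right moral, and matches how the paper uses the result: the ``if'' direction is powered by \eqref{condition} alone, the domain hypothesis serving only to guarantee it.
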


This theorem says that under the assumption $R$ is a domain, 
the symmetry of the $h$-vector of $R$ can be a necessary and sufficient condition for $R$ to be Gorenstein. 
On the other hand, Theorem \ref{suff} says that a certain ``almost symmetry'' of the $h$-vector of $R$ 
can be a sufficient condition for $R$ to be almost Gorenstein. 

\begin{proof}[Proof of Theorem \ref{suff}]
When $s$ is even, $h_{\lfloor s/2 \rfloor}=h_{s-\lfloor s/2 \rfloor}$ is automatically satisfied. 
Hence, $R$ is Gorenstein by Theorem \ref{symm}, in particular, almost Gorenstein. 
Assume that $s$ is odd. 

Let $C=\cok \phi$, where $\phi$ is an injection in \eqref{condition}. We may show that $\mu(C)=e(C)$. 
By Proposition \ref{i-}, we have 
$$[[C]]=\frac{(h_{(s+1)/2}-h_{(s-1)/2})t^{(s-1)/2}}{(1-t)^{d-1}}.$$ 
Hence, it is easy to see that $C$ satisfies $\mu(C)=e(C)(=h_{(s+1)/2}-h_{(s-1)/2})$, as required. 
\end{proof}

\bigskip

\section{Almost Gorenstein homogeneous domains with small socle degrees}\label{domain}

Assume that $R$ satisfies the condition \eqref{condition}. 
Let $(h_0,h_1,\ldots,h_s)$ be the $h$-vector of $R$. 
In this section, we discuss the condition for $R$ to be almost Gorenstein in terms of $h$-vectors 
in the case where $s$ is small. First, we observe the case $s=1$.

\begin{Theorem}[{\cite[Theorem 10.4]{GTT}}]
In the case $s=1$, $R$ is always almost Gorenstein. Moreover, in this case, $R$ is Gorenstein if and only if $h_1=1$. 
\end{Theorem}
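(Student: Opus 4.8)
The plan is to read off both assertions directly from the machinery already set up, the point being that the relevant index ranges collapse when $s=1$. Throughout, the $h$-vector is simply $(h_0,h_1)=(1,h_1)$ with $h_1\geq 1$, since $h_s=h_1\neq 0$ by the definition of the socle degree.

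For the first assertion, I would observe that the hypothesis of Theorem \ref{suff}, namely $h_i=h_{s-i}$ for $i=0,1,\ldots,\lfloor s/2\rfloor-1$, is vacuous when $s=1$: here $\lfloor 1/2\rfloor-1=-1$, so the index set is empty and the condition holds trivially. Thus Theorem \ref{suff} applies verbatim and yields that $R$ is almost Gorenstein. If one prefers a self-contained argument, Proposition \ref{i-} gives $[[C]]=(h_1-1)/(1-t)^{d-1}$, so the numerator of the Hilbert series of $C$ is the constant $h_1-1$ and hence $e(C)=h_1-1$. To check $\mu(C)=e(C)$, I would reduce $C$ modulo a linear regular sequence of length $d-1$ (which exists because $C$ is Cohen--Macaulay of dimension $d-1$ over the infinite field $k$) to obtain an Artinian module $\bar C$ with $[[\bar C]]=h_1-1$. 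This $\bar C$ is concentrated in degree $0$, so by graded Nakayama $\mu(C)=\dim_k\bar C=h_1-1=e(C)$; the degenerate case $h_1=1$ simply gives $C=0$.

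For the Gorenstein characterization I would invoke Corollary \ref{prop1}. For $s=1$ the condition there, $h_j=h_{s-j}$ for all $j=0,\ldots,\lfloor s/2\rfloor$, reduces to the single equality $h_0=h_1$, i.e.\ $h_1=1$. Hence $R$ is Gorenstein if and only if $h_1=1$.

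There is essentially no obstacle in this argument: the entire content is that the almost-symmetry hypothesis of Theorem \ref{suff} and the symmetry condition of Corollary \ref{prop1} both degenerate at $s=1$. The only step that needs a word of justification is the equality $\mu(C)=e(C)$ for a Cohen--Macaulay module whose Hilbert-series numerator is a constant, and this is handled cleanly by the Artinian reduction described above.
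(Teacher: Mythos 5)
Your proof is correct, but it takes a genuinely different route from the paper's. The paper does not argue internally at all: it proves the statement by invoking the external result \cite[Theorem 10.4]{GTT}, observing that the hypothesis $a=1-d$ there is equivalent to $s=1$, that the hypothesis ``$Q(R)$ is a Gorenstein ring'' is automatic under the standing assumption \eqref{condition} (by \cite[Lemma 3.1]{GTT}), and then deducing the Gorenstein criterion $h_1=1$ from Stanley's symmetry theorem (Theorem \ref{symm}). You instead stay entirely inside the paper's own machinery: the almost-Gorenstein assertion follows because the hypothesis of Theorem \ref{suff} is vacuous at $s=1$ (or, self-containedly, from Proposition \ref{i-}, which gives $[[C]]=(h_1-1)/(1-t)^{d-1}$, followed by your Artinian reduction to verify $\mu(C)=e(C)$), and the Gorenstein criterion follows from Corollary \ref{prop1}. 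There is no circularity in this, since Theorem \ref{suff} and Corollary \ref{prop1} are established independently of the present statement. Your route has two advantages: it is self-contained, and it only ever uses the hypothesis \eqref{condition}, which is exactly the standing assumption of the section; by contrast, the paper's appeal to Theorem \ref{symm} for the ``Gorenstein iff $h_1=1$'' part is slightly loose, because Theorem \ref{symm} is stated for domains, while Corollary \ref{prop1} (your tool) needs no domain hypothesis. What the paper's route buys is the connection to the general socle-degree-one result of \cite{GTT}, which is presumably why the theorem is attributed to that paper in the first place.
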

\begin{proof}
In the statement of \cite[Theorem 10.4]{GTT}, 
the condition $a=1-d$ is equivalent to $s=1$ and the condition ``$Q(R)$ is a Gorenstein ring'' 
is automatically satisfied since $R$ satisfies \eqref{condition} (\cite[Lemma 3.1]{GTT}). 
Thus, \cite[Theorem 10.4]{GTT} directly implies the assertion. 
Moreover, the condition for $R$ to be Gorenstein follows from Theorem \ref{symm}. 
\end{proof}

Next, let us discuss the case $s>1$. 

It is well known that the inequality $h_s \leq r(R)$ holds in general. 
We say that $R$ is {\em level} if $h_s=r(R)$. (See, e.g., \cite{HibiASL}). 
One can see from \cite[Lemma 10.2]{GTT} that 
if $R$ is level and $s>1$, then $R$ is almost Gorenstein if and only if $R$ is Gorenstein.

Before considering the almost Gorensteinness of $R$, 
we give an upper bound for the Cohen--Macaulay types of Cohen--Macaulay homogeneous {\em domains} in Theorem \ref{upper}. 
We will use this for the proof of Theorem \ref{hvector}. 
The most part of the discussions for Theorem \ref{upper} is derived from \cite[Section 3]{Yanagawa}. 

\begin{Theorem}\label{upper}
Let $R$ be a Cohen--Macaulay homogeneous domain. Then 
\begin{align}\label{ineq}
r(R) \leq \sum_{i=2}^s h_i - (s-2)h_1.
\end{align}
\end{Theorem}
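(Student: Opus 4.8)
The plan is to bound the Cohen--Macaulay type $r(R)$ by producing enough linear forms to cut $R$ down to an Artinian quotient and then controlling the socle dimensions. Since $R$ is a Cohen--Macaulay homogeneous domain over an infinite field, I would first pass to a homogeneous system of parameters $\theta_1,\ldots,\theta_d$ of degree one (a linear system of parameters exists because the field is infinite). Setting $A=R/(\theta_1,\ldots,\theta_d)$, the Artinian reduction $A$ is a graded Artinian $k$-algebra whose Hilbert series equals the $h$-polynomial $h_0+h_1t+\cdots+h_st^s$, so $\dim_k A_i=h_i$ for each $i$. Crucially, $r(R)=\dim_k \operatorname{soc}(A)$, so the whole problem is reduced to estimating the dimension of the socle of $A$, that is, $\sum_{i=0}^s \dim_k(\operatorname{soc}(A)\cap A_i)$.

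\noindent The key structural input, exploiting that $R$ is a \emph{domain}, is that multiplication by a generic linear form $\ell$ is injective on $R$ in low degrees; concretely, for a general linear form $\ell$, the map $A_{i-1}\xrightarrow{\ \ell\ }A_i$ should be studied degree by degree. The socle in degree $i$ consists of elements annihilated by the whole maximal ideal, in particular by $\ell$, so $\dim_k(\operatorname{soc}(A)_i)\le h_i-\operatorname{rank}(A_{i}\xrightarrow{\ \ell\ }A_{i+1})$. The domain hypothesis, via the Yanagawa-type argument referenced from \cite[Section 3]{Yanagawa}, forces these multiplication maps to have large rank in the bottom degrees: specifically one expects injectivity of $A_0\to A_1$ and that the rank bounds accumulate so that the socle cannot be too big in degrees $0,1$, which is where the term $-(s-2)h_1$ enters. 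I would set up the telescoping sum $\sum_{i=0}^s \dim_k(\operatorname{soc}(A)_i)$ and use $h_0=1$ together with the multiplication-map rank inequalities to absorb the degree-$0$ and degree-$1$ contributions, leaving $\sum_{i=2}^s h_i$ as the leading contribution and the correction $-(s-2)h_1$ coming from repeatedly subtracting $h_1$ across the intermediate degrees.

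\noindent More precisely, I expect the estimate to proceed as follows. Since $\ell$ is a nonzerodivisor-like generic form, for each $i$ the rank of $A_{i-1}\xrightarrow{\ \ell\ }A_i$ is at least $h_{i-1}-(\text{socle contribution in degree }i-1)$, and the domain property guarantees that a single generic $\ell$ multiplies $A_1$ into $A_2$, $A_2$ into $A_3$, and so on, injectively enough that each intermediate graded piece $A_i$ for $1\le i\le s-1$ contributes at least $h_1$ to the image, hence removes $h_1$ from the available socle. Summing the local bounds $\dim_k(\operatorname{soc}(A)_i)\le h_i - h_1$ over the relevant range of degrees $i$ and handling the endpoints $i=0$ and $i=s$ separately yields $r(R)=\sum_i\dim_k(\operatorname{soc}(A)_i)\le \sum_{i=2}^s h_i-(s-2)h_1$.

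\noindent The main obstacle will be making the generic-linear-form argument rigorous in positive-enough generality and, above all, correctly bookkeeping which degrees contribute the $h_1$ corrections so that the multiplier of $h_1$ comes out exactly $s-2$ rather than $s-1$ or $s$. This requires care at the top socle degree $s$, where multiplication by $\ell$ lands in $A_{s+1}=0$ and therefore imposes \emph{no} injectivity constraint, so that whole piece $A_s$ (of dimension $h_s$) may lie in the socle; I must ensure the endpoint degrees are not double-counted and that the domain hypothesis is genuinely used (it fails for general Cohen--Macaulay rings). I would verify the final count against the small cases $s=2$ and $s=3$, where the bound should read $r(R)\le h_2$ and $r(R)\le h_2+h_3-h_1$ respectively, to confirm the combinatorics of the correction term.
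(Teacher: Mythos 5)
There is a genuine gap at the heart of your argument. Your combinatorial bookkeeping is fine: writing $r(R)=\sum_i\dim_k\operatorname{soc}(A)_i$, using $\operatorname{soc}(A)_s=A_s$, and aiming for $\dim_k\operatorname{soc}(A)_i\le h_i-h_1$ for $1\le i\le s-1$ does sum to exactly $\sum_{i=2}^s h_i-(s-2)h_1$, and this intermediate inequality is indeed true --- it is the graded-dual form of the inequality \eqref{kagi}, $\dim_k(\omega_R\otimes k)_{-a+i}\le h_{s-i}-h_1$, which is the crux of the paper's proof. The problem is that you have no proof of it. Your proposed mechanism --- that because $R$ is a domain, a generic linear form $\ell$ acts ``injectively enough'' on the Artinian reduction $A$ so that $\operatorname{rank}(A_i\xrightarrow{\ \ell\ }A_{i+1})\ge h_1$ --- does not follow from anything you wrote. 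The domain property of $R$ gives injectivity of $\ell$ on $R$ itself (trivially, in all degrees), but it is destroyed by the Artinian reduction: $A$ is never a domain, and multiplication by a generic linear form on a graded Artinian algebra satisfies no rank lower bound in general (this is exactly the failure of Lefschetz-type properties). Indeed, for Cohen--Macaulay rings that are not domains the inequality $\dim_k\operatorname{soc}(A)_i\le h_i-h_1$ is simply false (e.g.\ Artinian algebras with $h$-vector $(1,4,1)$ have $h_2<h_1$), so any correct argument must convert the domain hypothesis into a statement that survives the reduction --- and genericity of $\ell$ alone does not do this.

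This is precisely why the paper (following Yanagawa) stops the reduction at dimension one rather than passing to the Artinian quotient: modding out a regular linear form reduces to the homogeneous coordinate ring of a set of points which, by the uniform position principle (valid here because $k$ is algebraically closed of characteristic $0$ and $R$ is a domain, \cite[Lemma 3.1]{Yanagawa}), is in \emph{uniform position}. Uniform position is the property that survives and does real work: it makes the multiplication pairings $\varphi_i:S_1\otimes(\omega_R)_{-s+i}\to(\omega_R)_{-s+i+1}$ on the canonical module $1$-generic (\cite[Proposition 1.5]{Yanagawa}), and then Eisenbud's theorem on $1$-generic pairings gives the rank bound $\dim_k\operatorname{Im}\varphi_i\ge\dim_k S_1+\dim_k(\omega_R)_{-s+i}-1$, from which \eqref{kagi} and hence the theorem follow by summing over the generating degrees of $\omega_R$. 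Your write-up cites \cite[Section 3]{Yanagawa} in passing but does not import either of these two inputs (uniform position, $1$-genericity), and without them the key rank estimate is an assertion, not a proof. To repair the argument you would need to replace the generic-linear-form heuristic on $A$ by exactly this reduction-to-points plus $1$-genericity step, at which point you have reproduced the paper's proof in dual form.
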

\begin{proof}
Take a nonzero divisor $x$ of $R$. Then we have $r(R)=r(R/(x))$ 
and the $h$-vectors of $R$ and $R/(x)$ are equal. Hence, by the similar argument to \cite[Lemma 3.1]{Yanagawa}, 
it suffices to show that for any set of points in uniform position $X \subset \PP^r$ ($r \geq 2$) 
and the homogeneous coordinate ring $R$ of $X$, the inequality \eqref{ineq} holds.

Assume that $R$ is the coordinate ring of a set of points in uniform position $X \subset \PP^r$ ($r \geq 2$). 
Let $s$ be the socle degree of $R$. 
For $i=1,\ldots,s-1$, let $$\varphi_i : S_1 \otimes (\omega_R)_{-s+i} \longrightarrow (\omega_R)_{-s+i+1}$$ 
be the multiplication map, where $S=\text{Sym}_k R_1$ is the coordinate ring of $\PP^r$. 
Then this map $\varphi_i$ is 1-generic (\cite[Proposition 1.5]{Yanagawa}). 
By the theory of 1-generic map, we have 
$\dim_k \img \varphi_i \geq \dim_k S_1 + \dim_k (\omega_R)_{-s+i} -1$ for each $i=1,\ldots,s-1$ (see \cite{E}). Hence 
\begin{align*}
\dim_k\img\varphi_i &\geq \dim_k S_1 + \dim_k (\omega_R)_{-s+i} -1 \\
&=h_1+1+(h_s+h_{s-1}+\cdots+h_{s-i+1})-1\\
&=h_1+h_s+h_{s-1}+\cdots+h_{s-i+1}. 
\end{align*}
On the other hand, we have $\dim_k (\omega_R)_{-s+i+1}=h_s+h_{s-1}+\cdots+h_{s-i}$. 
Thus, we see that $\dim_k (\omega_R)_{-s+i+1}-\dim_k\img\varphi_i \leq h_{s-i}-h_1$. 
By $\dim_k (\omega_R \otimes k)_{-s+i+1} \leq \dim_k (\omega_R)_{-s+i+1}-\dim_k\img\varphi_i$, we obtain the inequality 
\begin{align}\label{kagi}
\dim_k (\omega_R \otimes k)_{-a+i} \leq h_{s-i}-h_1 
\end{align}
for $i=1,\ldots,s-1$. (Note that $-a=-s+1$ in this setting.) 
Since we can see that $(\omega_R \otimes k)_i = 0$ for each $i \geq 1$ and $i \leq -s$ (\cite[Lemma 3.9]{Yanagawa}), 
we have $\mu (\omega_R) \leq \dim_k (\omega_R)_{-a}+\sum_{i=1}^{s-1}(\dim_k (\omega_R)_{-a+i}-\dim_k\img\varphi_i )$. Therefore, 
\begin{align*}
r(A)=\mu(\omega_R) &\leq \dim_k (\omega_R)_{-a} + \sum_{i=1}^{s-1}(\dim_k (\omega_R)_{-a+i}-\dim_k\img\varphi_i ) \\
&\leq h_s + \sum_{i=1}^{s-1}(h_{s-i}-h_1) =\sum_{i=2}^s h_i -(s-2)h_1. 
\end{align*}
\end{proof}

As an immediate corollary of Theorem \ref{upper}, we see the following. 
Note that the following has been already obtained in \cite{Yanagawa}. 
\begin{Corollary}[{\cite[Corollary 3.11]{Yanagawa}}]\label{kei_yanagawa}
If the socle degree of $R$ is $2$, then $R$ is level. 
\end{Corollary}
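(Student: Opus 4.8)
The plan is to deduce the statement directly from the bound established in Theorem \ref{upper}, so that essentially no new work is required. Recall that $R$ is level precisely when $h_s = r(R)$, and that the inequality $h_s \leq r(R)$ holds in general, as noted above. Thus, to conclude levelness in the case $s = 2$, it suffices to establish the reverse inequality $r(R) \leq h_s$.

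First I would specialize the inequality \eqref{ineq} of Theorem \ref{upper} to the case $s = 2$. On the right-hand side $\sum_{i=2}^s h_i - (s-2)h_1$, the summation runs over the single index $i = 2$ and so contributes only $h_2$, while the coefficient $(s-2)$ equals $0$ and kills the $h_1$ term entirely. Hence Theorem \ref{upper} yields $r(R) \leq h_2$. Combining this with the universal inequality $h_2 = h_s \leq r(R)$ forces $r(R) = h_2 = h_s$, which is exactly the condition that $R$ be level.

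The only genuine content lies in Theorem \ref{upper}; once that bound is in hand, the corollary is an immediate arithmetic specialization, and there is no real obstacle to overcome. The single point worth recording is that the corollary is stated for a Cohen--Macaulay homogeneous domain, which is precisely the setting in which \eqref{ineq} was derived, so the argument introduces no extra hypotheses.
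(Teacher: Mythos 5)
Your proposal is correct and matches the paper's own proof essentially verbatim: both specialize the inequality \eqref{ineq} of Theorem \ref{upper} to $s=2$ to get $r(R)\leq h_2$, and combine it with the general inequality $h_s\leq r(R)$ to conclude $r(R)=h_2$, i.e., that $R$ is level.
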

\begin{proof}
In this case, the inequality \eqref{ineq} implies $r(R) \leq h_2$. On the other hand, as mentioned above, 
we always have $r(R) \geq h_2$. Hence, $r(R)=h_2$. 
\end{proof}

%%%%%%%%%%%%%%%%%%%%%%%%%%%%%%%%%%%%%
\bigskip

When $s=2$ or $s=3$ and $R$ is a domain, 
we can characterize the almost Gorensteinness of $R$ in terms of the $h$-vector as follows: 
\begin{Theorem}\label{hvector}
Assume that $R$ is a domain. 
\begin{itemize}
\item[(a)] When $s=2$, the following conditions are equivalent: 
\begin{itemize}
\item[(i)] $R$ is almost Gorenstein; 
\item[(ii)] $R$ is Gorenstein; 
\item[(iii)] $h_2=1$. 
\end{itemize}
\item[(b)] When $s=3$, $R$ is almost Gorenstein if and only if $h_3=1$. 
\end{itemize}
\end{Theorem}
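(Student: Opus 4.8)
The plan is to reduce everything to the numerical characterization of almost Gorensteinness furnished by Corollary \ref{tokuchou}. Since $R$ is a domain it satisfies the condition \eqref{condition} by Proposition \ref{domeinnobaai}, so the whole apparatus of Section \ref{pre} applies; in particular, by Corollary \ref{tokuchou}(c), $R$ is almost Gorenstein if and only if
$$r(R)-1=\sum_{j=0}^{s-1}\bigl((h_s+\cdots+h_{s-j})-(h_0+\cdots+h_j)\bigr).$$
For $s=2$ and $s=3$ the right-hand side is an explicit expression in the $h_i$, and the strategy is to play this exact identity against the type bound of Theorem \ref{upper} (which is where the domain hypothesis enters) together with Stanley's symmetry criterion (Theorem \ref{symm}).

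For part (a), with $s=2$, the implication (ii)$\Rightarrow$(i) is immediate, and (ii)$\Leftrightarrow$(iii) follows from Theorem \ref{symm}: the condition $h_1=h_1$ is vacuous, so $R$ is Gorenstein exactly when $h_0=h_2$, that is, when $h_2=1$. The one substantive implication is (i)$\Rightarrow$(iii). Here I would invoke Corollary \ref{kei_yanagawa}, which says that a socle-degree-two domain is level, hence $r(R)=h_2$. Substituting $s=2$ into the displayed identity makes the sum collapse to $2(h_2-1)$, so almost Gorensteinness forces $r(R)-1=2(h_2-1)$, i.e.\ $r(R)=2h_2-1$; comparing with $r(R)=h_2$ yields $h_2=1$. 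Alternatively, one may simply cite the stated consequence of \cite[Lemma 10.2]{GTT} that a level ring with $s>1$ is almost Gorenstein if and only if it is Gorenstein, which gives (i)$\Leftrightarrow$(ii) at once.

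For part (b), with $s=3$, I would treat the two directions separately. The converse is essentially free: since $\lfloor 3/2\rfloor-1=0$, the hypothesis of Theorem \ref{suff} reads $h_0=h_3$, which is precisely $h_3=1$, so Theorem \ref{suff} applies directly (and this direction does not even use that $R$ is a domain). For the forward direction I would substitute $s=3$ into the displayed identity; a short computation gives
$$\sum_{j=0}^{2}\bigl((h_3+\cdots+h_{3-j})-(h_0+\cdots+h_j)\bigr)=3h_3+h_2-h_1-3,$$
so if $R$ is almost Gorenstein then $r(R)=3h_3+h_2-h_1-2$. On the other hand, Theorem \ref{upper} gives $r(R)\le h_2+h_3-h_1$ (here $s-2=1$). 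Comparing the two, $3h_3+h_2-h_1-2\le h_2+h_3-h_1$, i.e.\ $2h_3\le 2$, so $h_3\le 1$; since $h_3\ge 1$ we conclude $h_3=1$.

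The hard part will be the forward implication in (b): it is the only step where one must pinch the exact multiplicity identity of Corollary \ref{tokuchou}(c) against the inequality of Theorem \ref{upper}, and the conclusion $h_3=1$ emerges only because these two bounds squeeze together. This is also the place (along with Corollary \ref{kei_yanagawa} in part (a)) where the domain hypothesis is indispensable, since Theorem \ref{upper} is established only for Cohen--Macaulay homogeneous domains.
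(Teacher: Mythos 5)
Your proof is correct and follows essentially the same route as the paper: part (a) rests on levelness (Corollary \ref{kei_yanagawa}) combined with Theorem \ref{symm} (the paper cites the consequence of \cite[Lemma 10.2]{GTT} for (i)$\Leftrightarrow$(ii), which you also mention), and part (b) pinches the numerical criterion of Corollary \ref{tokuchou}(c) (equivalently $e(C)=\mu(C)$ via Propositions \ref{myu-} and \ref{i-}) against the type bound of Theorem \ref{upper}, with the converse supplied by Theorem \ref{suff}. The paper packages the forward direction of (b) as $e(C)-\mu(C)\geq 2(h_3-1)>0$ whenever $h_3>1$, which is the same algebra as your inequality $2h_3\leq 2$.
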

\begin{proof}
(a) When $s=2$, $R$ is always level by Corollary \ref{kei_yanagawa}. Hence, 
the equivalence (i) $\Leftrightarrow$ (ii) follows. Moreover, (ii) $\Leftrightarrow$ (iii) also holds by Theorem \ref{symm}. 

\noindent
(b) Let $C=\cok \phi$ in \eqref{condition}. From Proposition \ref{myu-} and Proposition \ref{i-}, we have 
$\mu(C)=r(R)-1$ and $e(C)=3(h_3-h_0)+h_2-h_1$, respectively. By the inequality \eqref{ineq} given in Theorem \ref{upper}, we see that 
\begin{align*}
e(C)-\mu(C) &= 3(h_3-h_0)+h_2-h_1 - (r(R) - 1) \\
&\geq 3(h_3-1)+h_2-h_1 - (h_2+h_3 - h_1 -1) \\
&=2(h_3-1).
\end{align*}
Hence, if $h_3 > 1$, then $R$ is never almost Gorenstein by $e(C) - \mu(C) > 0$. 

On the other hand, if $h_3=1$, then $R$ is almost Gorenstein by Theorem \ref{suff}, as required. 
\end{proof}

\begin{Remark}{\em 
In the case $s \geq 4$, we do not know any characterization of almost Gorensteinness in terms of $h$-vectors. 
However, any characterization of almost Gorensteinness using $h$-vectors might be impossible. 
For example, whether $R$ is level or not cannot be determined by $h$-vectors. See \cite[Section 3]{HibiASL}. 
The examples described in \cite[Section 3]{HibiASL} say that the Cohen--Macaulay type cannot be determined 
in terms of $h$-vectors in general. Hence, by considering Corollary \ref{tokuchou}, 
it is natural to think of we cannot characterize almost Gorensteinness in terms of $h$-vectors, either. 
}\end{Remark}

We remain the following: 
\begin{Problem}
If exists, find examples of Cohen--Macaulay homogeneous domains $R$ and $R'$ such that 
\begin{itemize}
\item the $h$-vectors of $R$ and $R'$ are equal; 
\item $R$ is almost Gorenstein; 
\item $R'$ is not almost Gorenstein. 
\end{itemize}
\end{Problem}

%%%%%%%%%%%%%%%%%%%
Moreover, we are also interested in the question: 
if $R$ is an almost Gorenstein homogeneous domain with $s \geq 2$, then does $h_s=1$ holds? 
This is true when $s=2$ and $s=3$ by Theorem \ref{hvector}. 
On the other hand, when $R$ is not a domain, this is not true even in the case $s=2$. 
In fact, by taking the {\em ridge sum} of Gorenstein simplicial complexes, 
we can obtain an almost Gorenstein Stanley--Reisner ring with $h_s \geq 2$. 
For the detail, see \cite{MaMu}. 

Actually, this is true in general. 
\begin{Theorem}\label{new}
Let $R$ be an almost Gorenstein homogeneous domain and $(h_0,h_1,\ldots,h_s)$ its $h$-vector with $s \geq 2$. 
Then $h_s=1$. 
\end{Theorem}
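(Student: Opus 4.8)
The plan is to extract the single most restrictive consequence of almost Gorensteinness, namely the degree-one instance of the equality in Corollary \ref{tokuchou}(d), and to play it against the degree-one instance of the $1$-generic map bound established inside the proof of Theorem \ref{upper}. Since $R$ is a domain it satisfies \eqref{condition} by Proposition \ref{domeinnobaai}, so Corollary \ref{tokuchou} is available; as $R$ is almost Gorenstein, condition (d) holds. The hypothesis $s \geq 2$ guarantees that $j=1$ lies in the admissible range $1 \leq j \leq s-1$, so that both the equality \eqref{hutousiki} and the inequality \eqref{kagi} may be evaluated at $j=i=1$; this is precisely where $s\geq 2$ enters.

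Concretely, I would first read off from \eqref{hutousiki} with $j=1$ the exact value
$$\dim_k(\omega_R\otimes k)_{-a+1}=(h_s+h_{s-1})-(h_0+h_1)=h_s+h_{s-1}-h_1-1,$$
using $h_0=1$. Next, I would invoke the degree-one case of \eqref{kagi}, which for a Cohen--Macaulay homogeneous domain gives $\dim_k(\omega_R\otimes k)_{-a+1}\leq h_{s-1}-h_1$. Combining the equality with the inequality yields $h_s+h_{s-1}-h_1-1\leq h_{s-1}-h_1$, that is, $h_s\leq 1$. Since $h_s$ is a nonzero nonnegative integer by the definition of the $h$-vector, necessarily $h_s\geq 1$, and therefore $h_s=1$, as claimed.

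The one point that needs care --- and the main obstacle --- is the justification that \eqref{kagi} applies to an arbitrary Cohen--Macaulay homogeneous domain rather than only to the coordinate ring of a set of points in uniform position, for which it is literally proved in Theorem \ref{upper}. I would handle this exactly as in that proof: choosing a degree-one nonzerodivisor $x$ and passing to $R/(x)$ leaves the $h$-vector unchanged and, because $x$ is a nonzerodivisor on the maximal Cohen--Macaulay module $\omega_R$, identifies each graded piece $(\omega_R\otimes k)_{-a+j}$ with the corresponding piece for $R/(x)$; iterating reduces the claim to the one-dimensional situation and then to points in uniform position, where the $1$-generic argument applies verbatim. I would also remark that the summed bound of Theorem \ref{upper} by itself is not enough: feeding it into Corollary \ref{tokuchou}(c) only produces $\sum_{i=2}^{s}(2i-s-1)h_i\leq s-1$, whose coefficients change sign once $s\geq 4$, so it is the ungummed, degree-one refinement \eqref{kagi} that does the real work.
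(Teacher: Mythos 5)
Your proposal is correct and is essentially identical to the paper's own proof: both combine the equality \eqref{hutousiki} at $j=1$ (available via Proposition \ref{domeinnobaai} and Corollary \ref{tokuchou}(d)) with the domain bound \eqref{kagi} at $i=1$ to get $h_s\leq h_0=1$. Your explicit justification that \eqref{kagi} transfers from the uniform-position setting to an arbitrary Cohen--Macaulay homogeneous domain is a detail the paper leaves implicit (it simply invokes \eqref{kagi} "since $R$ is a domain"), and your closing remark that the summed inequality \eqref{ineq} alone would not suffice is a worthwhile observation, but the argument is the same.
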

\begin{proof}
From Corollary \ref{tokuchou} (d), the inequality \eqref{hutousiki} holds. 
In particular, we see that $$\dim_k(\omega_R \otimes k)_{-a+1}=(h_s+h_{s-1})-(h_0+h_1).$$ 
On the other hand, since $R$ is a domain, we also obtain $$\dim_k(\omega_R \otimes k)_{-a+1} \leq h_{s-1}-h_1$$
by \eqref{kagi} for $i=1$. Therefore, we conslude that $h_s \leq h_0=1$, i.e., $h_s=1$. 
\end{proof}

\bigskip

\section{Examples : Almost Gorenstein Ehrhart rings}\label{rei}

In this section, we provide some examples of almost Gorenstein homogeneous domains 
which are given by {\em Ehrhart rings} of lattice polytopes. 

Before giving examples, let us recall what the Ehrhart ring of a lattice polytope is. 
Let $\Pc \subset \RR^d$ be a lattice polytope, which is a convex polytope all of whose vertices belong to 
the standard lattice $\ZZ^d$, of dimension $d$. Let $k$ be a field. Then we define the $k$-algebra $k[P]$ as follows: 
\begin{align*}
k[\Pc]=k[ {\bf X}^\alpha Z^n : \alpha \in n\Pc \cap \ZZ^d, \; n \in \ZZ_{\geq 0}], 
\end{align*}
where for $\alpha=(\alpha_1,\ldots,\alpha_d) \in \ZZ^d$, ${\bf X}^\alpha Z^n=X_1^{\alpha_1} \cdots X_d^{\alpha_d} Z^n$ 
denotes a Laurent monomial in $k[X_1^\pm, \ldots,X_d^\pm, Z]$ and $n\Pc=\{nv : v \in \Pc\}$. 
This $k[\Pc]$ is a normal Cohen--Macaulay graded domain of dimension $d+1$, 
where a grading is defined by $\deg ({\bf X}^\alpha Z^n) =n$ for $\alpha \in n\Pc \cap \ZZ^d$. 
The ring $k[\Pc]$ is called the {\em Ehrhart ring} of $\Pc$. 
Note that the Ehrhart ring is not necessarily standard graded, but is semi-standard graded, i.e., 
$k[\Pc]$ is a finitely generated module over the subring of $k[P]$ generated by all the elements of degree 1. 
Hence, the Hilbert series of $k[\Pc]$ (called the {\em Ehrhart series} of $\Pc$) is of the form: 
$$[[\; k[\Pc]\; ]]=\frac{\sum_{i=0}^s h_i^* t^i}{(1-t)^{d+1}}, \;\;\; h_i^* \in \ZZ_{\geq 0}.$$ 
The sequence of the coefficiens $h^*(\Pc)=(h_0^*,h_1^*,\ldots,h_s^*)$ appearing in the numerator of the Ehrhart series 
is called the {\em $h^*$-vector} (or the $\delta$-vector) of $\Pc$. 
It is known that the socle degree $s$ of the Ehrhart ring of $\Pc$ can be written as follows: 
\begin{align}\label{socle}
s=d+1-\min\{ m \in \ZZ_{> 0} : m \inte(\Pc) \cap \ZZ^d \not= \emptyset\}, 
\end{align}
where $\inte(\Pc)$ denotes the interior of $\Pc$. Thus, in particular, $s \leq d$. 
For more details on Ehrhart rings or $h^*$-vectors, we refer \cite[Part 2]{HibiRedBook}.

First, we supply examples of Ehrhart rings whose almost Gorensteinness can be guaranteed by Theorem \ref{suff}. 
Let $\eb_1,\ldots,\eb_d \in \ZZ^d$ be the standard basis for $\RR^d$, where each $\eb_i$ is the $i$th unit vector, 
and let $\con(X)$ denote the convex hull of a set $X \subset \RR^d$. 
\begin{Example}{\em 
Let $\Pc_1=\con(\{\pm\eb_1,\pm\eb_2,\pm\eb_3, \eb_1+\eb_2+2\eb_3\}) \subset \RR^3$. 
Then $\Pc_1$ is a lattice polytope of dimension 3 and we see that $k[\Pc_1]$ is standard graded and $h^*(\Pc_1)=(1,4,7,1)$. 
Hence, by Theorem \ref{suff}, $k[\Pc_1]$ is almost Gorenstein, while this is not Gorenstein. 

Moreover, let $\Pc_2=\con(\{\pm\eb_1,\ldots,\pm\eb_5, \eb_1+\cdots+\eb_4+2\eb_5\}) \subset \RR^5$ 
be a lattice polytope of dimension 5. Then we see that $k[\Pc_2]$ is standard graded and $h^*(\Pc_2)=(1,6,16,26,6,1)$. 
Hence, by Theorem \ref{suff} again, $k[\Pc_2]$ is also almost Gorenstein but not Gorenstein. 

In general, let $d=2e+1$ with $e \geq 1$ and 
let $\Pc_e$ be the convex hull of $\{\pm \eb_1,\ldots,\pm \eb_d, \eb_1+\cdots+\eb_{d-1}+2\eb_d\} \subset \RR^d$. 
We compute $h^*(\Pc_e)$ for small $e$'s and the following show the results. 
\begin{align*}
&e=3: \;\; h^*(\Pc_3)=(1, 8, 29, 64, 99, 29, 8, 1), \\ %d=7
&e=4: \;\; h^*(\Pc_4)=(1, 10, 46, 130, 256, 382, 130, 46, 10, 1), \\ %d=9
&e=5: \;\; h^*(\Pc_5)=(1, 12, 67, 232, 562, 1024, 1486, 562, 232, 67, 12, 1), \\ %d=11
&e=6: \;\; h^*(\Pc_6)=(1, 14, 92, 378, 1093, 2380, 4096, 5812, 2380, 1093, 378, 92, 14, 1). %d=13
\end{align*}
The Ehrhart rings of these polytopes are standard graded and these are almost Gorenstein by Theorem \ref{suff}. 
We expect that $k[\Pc_e]$ is always standard graded and almost Gorenstein for every $e$, 
although it seems difficult to give a precise proof. 
}\end{Example}

Next, we provide other examples of almost Gorenstein Ehrhart rings. The examples which we will give 
are obtained from the lattice polytopes arising from finite partially ordered sets (posets, for short).

Let $P=\{x_1,\ldots,x_n\}$ be a poset with a partial order $\prec$ and let 
$$\Oc(P)=\{(a_1,\ldots,a_n) \in \RR^n : a_i \geq a_j \text{ if }x_i \preceq x_j \text{ in }P, \;\; 0 \leq a_i \leq 1 \text{ for }i=1,\ldots,n\}.$$
This convex polytope $\Oc(P)$ is called the {\em order polytope} of $P$. 
It is known that $\Oc(P)$ is a lattice polytope 
and the Ehrhart ring $k[\Oc(P)]$ is often called the {\em Hibi ring} of $P$. It is known that: 
\begin{itemize}
\item $\dim \Oc(P)=\sharp P$; 
\item $k[\Oc(P)]$ is standard graded and an algebra with straightening laws on $P$ (\cite{Hibilat}); 
\item the socle degree of $k[\Oc(P)]$ is equal to $\sharp P -\max\{ \sharp C : C \text{ is a chain in }P\}$, 
where a chain is a totally ordered subset of $P$. 
\end{itemize}
For more detailed information on order polytopes, please consult \cite{StanleyEC}, 
or on Hibi rings, please consult \cite{EHHM, Hibilat, HibiASL} and the references therein.

We describe the $h^*$-vector of $\Oc(P)$ in terms of $P$. 
Let $P=\{x_1,\ldots,x_n\}$ be a poset with a {\em natural} partial order $\prec$, i.e., if $x_i \prec x_j$, then $i<j$. 
We say that a map $\sigma : P \rightarrow [n]$ from $P$ to the $n$-elements chain 
{\em order-preserving} if $x \preceq y$ implies $\sigma(x) \leq \sigma(y)$. 
We can identify an order-preserving map $\sigma : P \rightarrow [n]$ with a permutation 
$\begin{pmatrix}
1              &\cdots &n \\
\sigma^{-1}(1) &\cdots &\sigma^{-1}(n)
\end{pmatrix} \in S_n$. The set of all the permutations obtained in this way is denoted by $\Qc(P)$. 
For example, if $P=\{x_1,x_2,x_3,x_4\}$ is a poset having a natural partial order $x_1 \prec x_3, x_2 \prec x_3$ and $x_2 \prec x_4$, 
then $\Qc(P)=\{1234, 2134, 1243, 2143, 2413\} \subset S_4$. Here, we denote $a_1a_2 \cdots a_n$ instead of the usual notation 
$\begin{pmatrix}
1   &2   &\cdots &n \\
a_1 &a_2 &\cdots &a_n
\end{pmatrix} \in S_n$. 
Moreover, for $\pi \in S_n$, let $d(\pi)=\sharp\{ i \in \{1,\ldots,n-1\}: \pi(i)>\pi(i+1)\}$ and 
let $D(\pi)=\{ i \in \{1,\ldots,n-1\}: \pi(i)>\pi(i+1)\}$. (Clearly, $d(\pi)=\sharp D(\pi)$.)

\begin{Proposition}[{cf. \cite[Theorem 4.5.14]{StanleyEC}}]\label{h_order_poly}
Let $P=\{x_1,\ldots,x_n\}$ be a poset with a natural partial order and 
let $h^*(\Oc(P))(=h(k[\Oc(P)]))=(h_0,h_1,\ldots,h_s)$. Then 
$$h_i=\sharp \{ \pi \in \Qc(P) : d(\pi)=i\}$$
for each $i=0,1,\ldots,s$. 
\end{Proposition}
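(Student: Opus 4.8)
The plan is to compute the Hilbert series of the Ehrhart ring $k[\Oc(P)]$ directly as a lattice-point generating function and to read off the $h$-vector from its numerator, using the theory of $P$-partitions to organize the count by linear extensions. Since $\dim\Oc(P)=\sharp P=n$ and $k[\Oc(P)]$ is an Ehrhart ring, one has $\dim_k k[\Oc(P)]_m=\sharp(m\Oc(P)\cap\ZZ^n)$ and the Krull dimension is $n+1$. Hence it suffices to establish
$$\sum_{m\geq 0}\sharp(m\Oc(P)\cap\ZZ^n)\,t^m=\frac{\sum_{\pi\in\Qc(P)}t^{d(\pi)}}{(1-t)^{n+1}},$$
because comparing this with $[[\,k[\Oc(P)]\,]]=\sum_i h_it^i/(1-t)^{n+1}$ immediately yields $h_i=\sharp\{\pi\in\Qc(P):d(\pi)=i\}$.

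First I would set up a decomposition of the lattice points by a sorting map. A point $a=(a_1,\dots,a_n)\in m\Oc(P)\cap\ZZ^n$ satisfies $m\geq a_i\geq 0$ and $a_i\geq a_j$ whenever $x_i\preceq x_j$. To each such $a$ I associate the unique permutation $\pi=\pi(a)$ that arranges the coordinates in weakly decreasing order, $a_{\pi(1)}\geq a_{\pi(2)}\geq\cdots\geq a_{\pi(n)}$, with ties broken by requiring $\pi(k)<\pi(k+1)$ whenever $a_{\pi(k)}=a_{\pi(k+1)}$. The key point is that the defining inequalities of $\Oc(P)$ force $\pi\in\Qc(P)$: if $x_i\prec x_j$ then $i<j$ by the natural labeling and $a_i\geq a_j$, so in both the strict and the tie case the index $i$ precedes $j$ in the sorted order, i.e.\ $\pi^{-1}(i)<\pi^{-1}(j)$; thus $\sigma:=\pi^{-1}$ is an order-preserving bijection $P\to[n]$ whose associated permutation $\sigma^{-1}(1)\cdots\sigma^{-1}(n)$ is exactly $\pi$.

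Next I would count, for a fixed $\pi\in\Qc(P)$, the fiber of this map. By construction $a$ lies in the fiber precisely when $m\geq a_{\pi(1)}\geq\cdots\geq a_{\pi(n)}\geq 0$ with a \emph{strict} inequality $a_{\pi(k)}>a_{\pi(k+1)}$ forced exactly at the descent positions $k\in D(\pi)$ (a weak inequality being allowed, and consistent with the tie-break, precisely when $\pi(k)<\pi(k+1)$). Writing $b_k=a_{\pi(k)}$ and substituting $b'_k=b_k-\sharp\{\ell\in D(\pi):\ell\geq k\}$ converts the chain into a genuinely weakly decreasing one $m-d(\pi)\geq b'_1\geq\cdots\geq b'_n\geq 0$, so the fiber has size $\binom{m+n-d(\pi)}{n}$. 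Summing over $m$ gives $\sum_{m\geq 0}\binom{m+n-d(\pi)}{n}t^m=t^{d(\pi)}/(1-t)^{n+1}$, and summing over all $\pi\in\Qc(P)$ produces the displayed identity.

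The main obstacle, and the only place demanding genuine care, is the bookkeeping of conventions in the sorting step: because $\Oc(P)$ is defined by the reversed inequalities $a_i\geq a_j$ while $\Qc(P)$ is built from order-\emph{preserving} maps, one must verify that the tie-breaking rule outputs the permutation $\pi$ itself rather than its inverse, so that the exponent is $d(\pi)$ and not $d(\pi^{-1})$. These genuinely differ, as one already checks on the paper's running example, where the two statistics would give $(1,3,1)$ versus $(1,2,2)$. Once the sorting map is pinned to $\Qc(P)$ with the correct orientation, the forced-strictness-at-descents statement and the ensuing binomial count are routine, and the whole argument is the order-polytope specialization of \cite[Theorem 4.5.14]{StanleyEC}.
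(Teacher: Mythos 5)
Your proof is correct. The paper gives no argument of its own for this proposition---it is quoted directly from Stanley (\cite[Theorem 4.5.14]{StanleyEC})---and what you have written is exactly the standard $P$-partition/transfer argument underlying that cited theorem, specialized to the order polytope: the sorting-with-tie-breaks decomposition of $m\Oc(P)\cap\ZZ^n$ over linear extensions, strictness forced at descents, and the binomial count of each fiber (including the correct resolution of the $d(\pi)$ versus $d(\pi^{-1})$ convention issue, which your check on the running example settles in favor of $d(\pi)$, giving $(1,3,1)$ as it must).
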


Now, we will consider the following poset: 
For $m \geq 3$, let $P_m=\{x_1,x_2,\ldots,x_{2m-1},x_{2m}\}$ be the poset with the natural partial order 
$x_1 \prec x_3 \prec \cdots \prec x_{2m-1}$, $x_2 \prec x_4 \prec \cdots \prec x_{2m}$ and $x_1 \prec x_{2m}$. 
Figure \ref{poset} below is the Hasse diagram of $P_m$. 
\begin{figure}[htb!]
\centering
\includegraphics[scale=0.3]{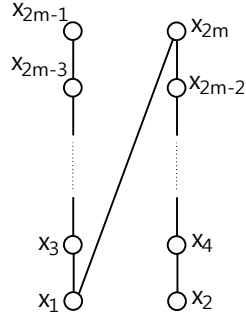}
\caption{the poset $P_m$}\label{poset}
\end{figure}

\begin{Theorem}\label{Hibiring}
Let $P_m$ be as above. 
Then $k[\Oc(P_m)]$ is an almost Gorenstein homogeneous domain whose $h$-vector $(h_0,h_1,\ldots,h_m)$ satisfies 
$h_i=h_{m-i}$ for $i=0,2,3,\ldots,\lfloor m/2 \rfloor$ and $h_{m-1}=h_1+1$. 
\end{Theorem}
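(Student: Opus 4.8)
The plan is to compute the $h$-vector of $k[\Oc(P_m)]$ explicitly via Proposition \ref{h_order_poly}, and then apply Theorem \ref{suff} together with Theorem \ref{new}. By Proposition \ref{h_order_poly}, I need to understand the set $\Qc(P_m)$ of linear extensions of $P_m$ (viewed as permutations) and track the descent statistic $d(\pi)$. First I would determine the socle degree: since $P_m$ has $2m$ elements and its longest chain is $x_1 \prec x_3 \prec \cdots \prec x_{2m-1}$ (or equivalently the chain through the even-indexed elements together with $x_1 \prec x_{2m}$) of length $m+1$, the socle degree is $s = 2m - (m+1) = m-1$; so the $h$-vector has the form $(h_0,\ldots,h_{m-1})$. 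I should double-check whether the statement's indexing $(h_0,\ldots,h_m)$ is a typo or reflects a chain-length convention, and adjust the descent-counting accordingly so that the top index matches.

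The combinatorial heart of the argument is to enumerate, for each $i$, the number of order-preserving bijections $\sigma : P_m \to [2m]$ whose associated permutation has exactly $i$ descents. The plan is to exploit the rigid structure of $P_m$: it is built from two parallel chains (the odd-index chain and the even-index chain) glued only by the single extra relation $x_1 \prec x_{2m}$. A linear extension is essentially an interleaving of these two chains subject to that one cross-constraint, so I would set up a bijection between $\Qc(P_m)$ and certain lattice paths or shuffles, and compute the descent distribution as a $q$-analogue (a Gaussian-binomial-type count) corrected by the extra relation. The key technical step is to show that the resulting $h_i$ satisfy exactly $h_i = h_{m-i}$ for $i = 0,2,3,\ldots,\lfloor m/2\rfloor$ while $h_{m-1} = h_1 + 1$; I expect the symmetry for the ``bulk'' indices to follow from a near-involution on $\Qc(P_m)$ (complementation of permutations, which sends $d(\pi)$ to $(s) - d(\pi)$), with the single relation $x_1 \prec x_{2m}$ breaking the involution only at the two ends and accounting precisely for the $+1$ discrepancy between $h_{m-1}$ and $h_1$.

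Once the $h$-vector is established, the almost Gorenstein conclusion is nearly immediate. The computed symmetry says $h_i = h_{m-i}$ holds for all $i = 0,1,\ldots,\lfloor s/2\rfloor - 1$ once I verify that the broken pair $(h_1,h_{m-1})$ lies at the extreme end $i=1$ and does not interfere with the interior symmetry required by Theorem \ref{suff} (here $s = m-1$, so $\lfloor s/2\rfloor - 1 = \lfloor (m-1)/2\rfloor - 1$, and I must confirm that the asymmetric pair sits at an index outside this range, or handle the small-$m$ boundary cases $m=3,4$ by hand). Since $k[\Oc(P_m)]$ is a standard graded normal Cohen--Macaulay domain, the hypotheses of Theorem \ref{suff} are met, yielding almost Gorensteinness directly. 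As a consistency check, Theorem \ref{new} forces $h_s = h_{m-1} = 1$ for an almost Gorenstein domain with $s \geq 2$, which the explicit computation should confirm.

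The main obstacle I anticipate is the exact descent enumeration: translating the single cross-relation $x_1 \prec x_{2m}$ into a clean correction term for the descent-generating function, and verifying the precise off-by-one relation $h_{m-1} = h_1 + 1$ rather than some other perturbation. The bulk symmetry should drop out of a standard complementation involution on linear extensions, but isolating exactly which permutations fail to be fixed or paired by that involution, and checking that they contribute a net $+1$ only to $h_{m-1}$, is the delicate bookkeeping step. I would guard against indexing errors by computing $h^*(\Oc(P_m))$ explicitly for $m=3$ and $m=4$ from $\Qc(P_m)$ directly and matching against the claimed pattern before committing to the general formula.
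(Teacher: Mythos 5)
Your combinatorial plan for the $h$-vector (a descent-statistic bijection on linear extensions, broken by exactly one exceptional permutation contributing the $+1$) is in the same spirit as the paper's Lemma \ref{hodai1}, but two details need repair. First, the longest chain of $P_m$ has $m$ elements, not $m+1$: the relation $x_1\prec x_{2m}$ does not let you insert $x_1$ into the even-indexed chain, since $x_1$ is incomparable to $x_2,x_4,\ldots,x_{2m-2}$; hence the socle degree is $s=m$, exactly as the theorem's indexing says. Second, plain complementation or reversal of permutations sends $d(\pi)$ to $(2m-1)-d(\pi)$ or preserves it; neither gives $d(\pi)\mapsto m-d(\pi)$, so the needed bijection $\Tc_i\to\Tc_{m-i}$ cannot be an off-the-shelf involution --- the paper builds it by hand (the map $\xi_i$), after deleting the single permutation $\tau$ with $d(\tau)=m-1$.

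The fatal gap is your concluding step. The broken pair is $(h_1,h_{m-1})$, i.e., it sits at index $i=1$, while Theorem \ref{suff} demands $h_i=h_{s-i}$ for \emph{all} $i=0,1,\ldots,\lfloor s/2\rfloor-1$ with $s=m$. Since $\lfloor m/2\rfloor-1\geq 1$ for every $m\geq 4$, the index $i=1$ lies inside the required range and the hypothesis of Theorem \ref{suff} is violated (only $m=3$ squeaks through). This is not a boundary technicality that can be verified away: the whole point of the theorem, as the paper emphasizes, is that $R_m$ realizes an almost symmetry \emph{different} from the one covered by Theorem \ref{suff}, so no application of that theorem can close the proof. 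The missing ingredient is a lower bound on the Cohen--Macaulay type. The paper proves $r(R_m)\geq m-1$ (Lemma \ref{hodai2}) by exhibiting $m-1$ explicit strictly order-reversing maps $v_i$ that are minimal generators of the canonical ideal of the Hibi ring, and then concludes via $e(C)=m-2$ (computed from the $h$-vector by Proposition \ref{i-}), $\mu(C)=r(R_m)-1\geq m-2$ (Proposition \ref{myu-}), and the general inequality $e(C)\geq\mu(C)$ from \eqref{ookii}, which together force $e(C)=\mu(C)$ and hence almost Gorensteinness by Corollary \ref{tokuchou}. Finally, note that Theorem \ref{new} only gives a necessary condition ($h_m=1$, which your small-case check would confirm once the indexing is fixed); it cannot be used to produce almost Gorensteinness.
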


Theorem \ref{Hibiring} says that there is an almost Gorenstein homogeneous ring having a ``different almost symmetry'' $h$-vector. 

\subsection{The $h$-vector of $R_m$} 
Let $R_m=k[\Oc(P_m)]$. Then we can compute by \eqref{socle} that 
the socle degree of $R_m$ is equal to $m$. 

First, for the proof of Theorem \ref{Hibiring}, let us observe the set $\Qc(P_m)$ in detail. Let 
\begin{align*}
\Sc=\{ \sigma \in S_{2m} : \; \sigma(1)<\sigma(3)<\cdots<\sigma(2m-1), 
\sigma(2)<\sigma(4)<\cdots<\sigma(2m), \sigma(1)<\sigma(2m)\}
\end{align*}
and let $\Sc^{-1}=\{\sigma^{-1} : \sigma \in \Sc\}$. Then $\Sc^{-1}=\Qc(P_m)$. 
Let $\tau=1325476\cdots (2m-4)(2m-1)(2m-2)2m \in S_{2m}$. Then we see that $\tau \in \Sc^{-1}$. 
Let $\Tc=\Sc^{-1} \setminus \{\tau\}$ and let $\Tc_i=\{\pi \in \Tc : d(\pi)=i\}$ for $i=0,1,\ldots,m$. 
We construct a map $\xi_i : \Tc_i \rightarrow \Tc_{m-i}$ for each $i=0,1,\ldots,m$ as follows:
Fix $\pi \in \Tc_i$. 
\begin{itemize}
\item Let $A=\{x_1,x_3,\ldots,x_{2m-1}\} \setminus \{x_{\pi(j)},x_{\pi(j+1)} : j \in D(\pi)\}$ and 
$B=\{x_2,x_4,\ldots,x_{2m}\} \setminus \{x_{\pi(j)},x_{\pi(j+1)} : j \in D(\pi)\}$. 
Since the parities of $\pi(j)$ and $\pi(j+1)$ are different for each $j \in D(\pi)$ and $j'+1 \not\in D(\pi)$ for each $j' \in D(\pi)$, 
we obtain that $\sharp A = \sharp B = m-i$. Let $p_1,p_2,\ldots,p_{m-i}$ (resp.  $q_1,q_2,\ldots,q_{m-i}$) be the positive integers 
with $A=\{x_{p_j} : 1 \leq j \leq m-i\}$ (resp. $B=\{x_{q_j} : 1 \leq j \leq m-i\}$) 
such that $p_1<p_2<\cdots<p_{m-i}$ (resp. $q_1<q_2<\cdots<q_{m-i}$). 
\item We construct the permutation $\widetilde{\pi}=a_1a_2 \cdots a_{2m}$ by defining each $a_i$ inductively as follows: 
Let $a_1=1$ if $p_1>1$ or let $a_1=2$ if $p_1=1$, and 
\begin{itemize}
\item for $1 \leq \ell <(p_1+q_1+1)/2$, let 
\begin{align*}
a_{\ell+1}=
\begin{cases}
a_\ell + 1, \; &\text{ if } a_\ell \leq \min\{p_1,q_1\}-2, \\
p_1, &\text{ if }a_\ell=q_1>p_1, \\
q_1, &\text{ if }a_\ell=p_1>q_1, \\
a_\ell+2, &\text{ otherwise}; 
\end{cases}
\end{align*}
\item for $(p_{j-1}+q_{j-1}+1)/2 \leq \ell < (p_j+q_j+1)/2$ with some $j \in \{2,\ldots,m-i\}$, let 
\begin{align*}
a_{\ell+1}=
\begin{cases}
a_\ell + 1, \; &\text{ if } \max\{p_{j-1},q_{j-1}\} + 1 \leq a_\ell \leq \min\{p_j,q_j\}-2, \\
p_{j-1}+2, &\text{ if }a_\ell \leq p_{j-1}, \; p_{j-1} > q_{j-1} \text{ and }a_\ell+2=q_j, \\
q_{j-1}+2, &\text{ if }a_\ell \leq q_{j-1}, \; q_{j-1} > p_{j-1} \text{ and }a_\ell+2=p_j, \\
p_j, &\text{ if }a_\ell = q_j>p_j, \\
q_j, &\text{ if }a_\ell = p_j>q_j, \\
a_\ell+2, &\text{ otherwise}; 
\end{cases}
\end{align*}
\item for $\ell \geq (p_{m-i}+q_{m-i}+1)/2$, let 
\begin{align*}
a_{\ell+1}=
\begin{cases}
a_\ell + 1, \; &\text{ if } a_\ell \geq \max\{p_{m-i},q_{m-i}\} + 1, \\
a_\ell+2, &\text{ otherwise}. 
\end{cases}
\end{align*}
\end{itemize}
See Example \ref{rensyuu} below for this construction. 
\item We define $\xi_i(\pi)=\widetilde{\pi}$. 
\end{itemize}

\begin{Example}\label{rensyuu}{\em 
Let $m=4$ and consider $\pi=13246857$. Then $d(\pi)=2$ and $D(\pi)=\{2,6\}$. 
Thus $A=\{x_1.x_7\}$, $B=\{x_4,x_6\}$ and $p_1=1, p_2=7, q_1=4, q_2=6$. 
We construct $\widetilde{\pi}=a_1a_2\cdots a_8$ as follows: 
\begin{align*}
&a_1=2 \text{ since }p_1=1, \; a_2=a_1+2=4, \; a_3=p_1=1, \\
&a_4=a_3+2=3, \; a_5=a_4+2=5, \; a_6=a_5+2=7, \; a_7=q_2=6 \text{ (since }a_6=p_2>q_2), \\
&a_8=a_7+2=8. 
\end{align*}
Actually, for $\pi \in \Tc_2$, we have $\widetilde{\pi}=24135768 \in \Tc_{4-2}$. 
}\end{Example}

\begin{Lemma}\label{hodai1} For this construction, 
\begin{itemize}
\item[(a)] $\widetilde{\pi} \in S_{2m}$ with $d(\widetilde{\pi})=m-i$; 
\item[(b)] the map $\overline{\pi}^{-1} : P \rightarrow [2m]$ 
defined by $\overline{\pi}^{-1}(x_j)=\widetilde{\pi}^{-1}(j)$ is order-preserving; 
\item[(c)] $\xi_i$ is bijective. 
\end{itemize}
\end{Lemma}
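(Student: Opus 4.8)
The plan is to first reformulate membership in $\Sc^{-1}=\Qc(P_m)$ concretely. A permutation $\pi$ lies in $\Sc^{-1}$ if and only if $\pi^{-1}\in\Sc$, and, since $\pi^{-1}(v)$ is the position of the value $v$ in $\pi=\pi(1)\cdots\pi(2m)$, the defining inequalities of $\Sc$ say exactly that the odd values $1,3,\ldots,2m-1$ occur in increasing order of position, the even values $2,4,\ldots,2m$ occur in increasing order of position, and the value $1$ occurs before the value $2m$. Thus $\Qc(P_m)$ is the set of such ``shuffles.'' Two facts used in the construction then fall out at once: at each descent $j\in D(\pi)$ the entries $\pi(j),\pi(j+1)$ have opposite parity (two consecutive odd, or two consecutive even, entries would be increasing), and no two descents are adjacent (a chain $\pi(j)>\pi(j+1)>\pi(j+2)$ would force $\pi(j)$ and $\pi(j+2)$ to have equal parity yet be decreasing). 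Hence each descent deletes one odd and one even index, distinct descents delete disjoint pairs, and so $\sharp A=\sharp B=m-i$.

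For part (a) I would prove $\widetilde{\pi}\in S_{2m}$ by induction on $\ell$, checking that at each step the value prescribed by the rule is the unique not-yet-used integer. The three regimes arrange the odd and the even values into alternating increasing runs whose break points are the $p_j,q_j$, and a routine (if lengthy) case analysis---including the two boundary-correction cases $p_{j-1}+2$ and $q_{j-1}+2$, which adjust the alignment between consecutive blocks---confirms that every integer of $\{1,\ldots,2m\}$ is produced exactly once. For the descent number, note that $a_\ell>a_{\ell+1}$ can occur only at the two swap cases $q_j\mapsto p_j$ and $p_j\mapsto q_j$, every other transition being strictly increasing, and that exactly one swap fires in each of the $m-i$ blocks; hence $d(\widetilde{\pi})=m-i$.

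Part (b) is equivalent to $\widetilde{\pi}\in\Sc^{-1}$, i.e.\ to $\widetilde{\pi}$ being a shuffle with $1$ before $2m$. The same block analysis shows that the construction emits the odd values in their natural increasing order and, independently, the even values in their natural increasing order, so both parity-subsequences of $\widetilde{\pi}$ are increasing and the two chain conditions hold. The delicate point is the relation $x_1\prec x_{2m}$, namely that $1$ precedes $2m$ in $\widetilde{\pi}$; this is exactly where the excluded permutation $\tau$ is special, since applying the construction to $\tau$ produces $2,4,\ldots,2m,1,3,\ldots,2m-1$, in which $2m$ precedes $1$. Here I would use that $\pi\in\Tc_i$, hence $\pi\neq\tau$, to rule out this degenerate output; together with (a) this also shows $\widetilde{\pi}\in\Tc_{m-i}$, so that $\xi_i$ is well defined.

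For part (c) I would prove bijectivity by showing that $\xi_{m-i}$ inverts $\xi_i$, that is $\xi_{m-i}\circ\xi_i=\mathrm{id}_{\Tc_i}$ and, symmetrically, $\xi_i\circ\xi_{m-i}=\mathrm{id}_{\Tc_{m-i}}$. The guiding observation is that the descent data of $\pi$ and of $\widetilde{\pi}$ are complementary: the odd and even indices deleted by the descents of $\widetilde{\pi}$ are precisely the surviving indices $A,B$ used to build $\widetilde{\pi}$ from $\pi$, so the sets $A,B$ recomputed from $\widetilde{\pi}$ are the complements, within the odd and the even index classes respectively, of the sets $A,B$ computed from $\pi$. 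Re-running the construction on $\widetilde{\pi}$ with these complementary data should then reproduce $\pi$ term by term. I expect this inversion to be the main obstacle: the piecewise rule, with its three regimes and several sub-cases, must be shown to reverse cleanly, which demands a careful matching of each swap of $\xi_i$ with the corresponding swap of $\xi_{m-i}$ and of the transitions between consecutive blocks (the example $\xi_2(13246857)=24135768$ together with $\xi_2(24135768)=13246857$ is a useful sanity check). Once these involutive identities hold, each $\xi_i$ is injective with full image, hence bijective, which completes the proof.
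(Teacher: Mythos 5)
Your treatment of parts (a) and (b) matches the paper's proof in all essentials: the same block-by-block analysis showing that the $a_\ell$ are pairwise distinct and exhaust $\{1,\ldots,2m\}$, the same identification of the descents of $\widetilde{\pi}$ (the paper records this as $D(\widetilde{\pi})=\{(p_j+q_j-1)/2 : j=1,\ldots,m-i\}$, one descent per block), the same reduction of (b) to the statement that both parity subsequences of $\widetilde{\pi}$ are increasing together with ``$1$ occurs before $2m$'', and the same use of $\pi\neq\tau$ to exclude the unique bad configuration $(p_1,q_1)=(1,2m)$, $m-i=1$. Your observation that the construction applied to the data of $\tau$ outputs $2,4,\ldots,2m,1,3,\ldots,2m-1$ is correct, as is your preliminary analysis of descents of shuffles (opposite parities at a descent, no two adjacent descents), which the paper states when defining $A$ and $B$.

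Part (c) is where you depart from the paper, and it is where your proposal has a genuine gap. You propose to prove bijectivity via the inversion identities $\xi_{m-i}\circ\xi_i=\mathrm{id}_{\Tc_i}$ and $\xi_i\circ\xi_{m-i}=\mathrm{id}_{\Tc_{m-i}}$, but you never prove them: you write that re-running the construction ``should'' reproduce $\pi$ term by term and you yourself call this verification ``the main obstacle.'' That verification is the entire content of your plan for (c), and it is left undone; given (a), it is equivalent to the assertion that an element of $\Tc$ is uniquely recoverable from its descent pairs. The paper's own argument shows that no inversion identity is needed at all: since $\widetilde{\pi}$ is determined by the data $p_1,\ldots,p_{m-i},q_1,\ldots,q_{m-i}$, and since by (a) this data can be read back off from $\widetilde{\pi}$ (its descent pairs are exactly the sets $\{p_j,q_j\}$), each $\xi_i$ is injective; then the existence of injections $\xi_i:\Tc_i\rightarrow\Tc_{m-i}$ and $\xi_{m-i}:\Tc_{m-i}\rightarrow\Tc_i$ between \emph{finite} sets already forces $\sharp\Tc_i=\sharp\Tc_{m-i}$, hence both maps are bijective --- and the proof of Theorem \ref{Hibiring} only ever uses this equality of cardinalities. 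To repair your argument, either switch to this double-injection-plus-finiteness reasoning, or actually prove the recoverability statement your inversion rests on; for the latter, a short route is available that avoids the case-by-case reversal you anticipate: the descent values of a shuffle determine the descent positions and the number of odd and even entries in each maximal increasing run, and since the odd entries and the even entries must each occur in increasing order globally, the content of every run (hence the whole permutation) is forced.
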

\begin{proof}
(a) For $(p_{j-1}+q_{j-1}+1)/2 \leq \ell < (p_j+q_j+1)/2$ with some $j \in \{2,\ldots,m-i\}$, by the definition of $a_\ell$, 
we can observe that $p_{j-1} \leq a_\ell \leq p_j$ when $a_\ell$ is odd or $q_{j-1} \leq a_\ell \leq q_j$ when $a_\ell$ is even. 
Moreover, we can also observe that for $(p_{j-1}+q_{j-1}+1)/2 \leq \ell, \ell' < (p_j+q_j+1)/2$ with $\ell \not= \ell'$, 
when the parities of $a_\ell$ and $a_{\ell'}$ are equal, $a_\ell < a_{\ell'}$ if and only if $\ell < \ell'$. 
Similar assertions also hold in the cases where $\ell < (p_1+q_1+1)/2$ and $\ell \geq (p_{m-i}+q_{m-i}+1)/2$. 
Thus, $a_1,a_2,\ldots,a_{2m}$ are all different integers with $1 \leq a_\ell \leq 2m$ for each $\ell$. 
Hence, $\widetilde{\pi}$ should define a permutation, i.e., $\widetilde{\pi} \in S_{2m}$. 
In addition, we also have $D(\widetilde{\pi})=\{(p_j+q_j-1)/2 : j=1,\ldots,m-i\}$. In particular, $d(\widetilde{\pi})=m-i$. 

\noindent
(b) As in (a) above, we know that when the parities of $a_\ell$ and $a_{\ell'}$ are equal, 
$a_\ell<a_{\ell'}$ if and only if $\ell<\ell'$. Moreover, one has $\widetilde{\pi}^{-1}(a_\ell)=\ell$ for each $\ell$. 

On the other hand, we know that if $a_\ell = 1$ and $a_{\ell'}=2m$, then $\ell < \ell'$. 
In fact, the situation $a_\ell=1$, $a_{\ell'}=2m$ and $\ell>\ell'$ may happen if $(p_1, q_1)=(1,2m)$ and $m-i=1$. 
However, this never happens because $\pi \not= 1325476\cdots (2m-4)(2m-1)(2m-2)2m = \tau$. 

Hence, the map $\overline{\pi}^{-1} : P \rightarrow [2m]$ defined by $\overline{\pi}^{-1}(x_{a_\ell})=\ell$ is order-preserving.

\noindent
(c) Since $\widetilde{\pi}$ is uniquely determined by $p_1,\ldots,p_{m-i},q_1,\ldots,q_{m-i}$, 
it is obvious that $\xi_i$ is injective. By the injectivities of $\xi_i : \Tc_i \rightarrow \Tc_{m-i}$ 
and $\xi_{m-i} : \Tc_{m-i} \rightarrow \Tc_i$ for each $i=0,1,\ldots,m$, we obtain the bijectivity of $\xi_i$ for each $i$, as required. 
\end{proof}

\subsection{The Cohen--Macaulay type of $R_m$}
Next, let us estimate the Cohen--Macaulay type of $R_m$ by considering its canonical ideal. 

Given a poset $P$, let $\hat{P} = P \cup \{\hat{0}, \hat{1}\}$ with $\hat{0} < x < \hat{1}$ for every $x \in P$. 
A map $v : \hat{P} \rightarrow \ZZ_{\geq 0}$ is called {\em order-reversing} (resp. {\em strictly order-reversing}) 
if $v(x) \leq v(y)$ (resp. $v(x)<v(y)$) for each $x,y \in \hat{P}$ with $x \succeq y$ (resp. $x \succ y$). 
Let $S(\hat{P})$ (resp. $T(\hat{P})$) denote the set of all order-reversing (all strictly order-reversing) maps 
$v : \hat{P} \rightarrow \ZZ_{\geq 0}$ with $v(\hat{1})=0$. 

It is shown in \cite{Hibilat} that $k[\Oc(P)]$ has a $k$-basis consisting of the monomials which look like 
$$Z^{v(\hat{0})} \prod_{p \in P}X_p^{v(p)}, \;\;\; v \in S(\hat{P}) $$
and it is also shown there that the monomials which look like 
$$Z^{v(\hat{0})} \prod_{p \in P}X_p^{v(p)}, \;\;\; v \in T(\hat{P}) $$
form a $k$-basis of the canonical ideal $I_{k[\Oc(P)]} \subset k[\Oc(P)]$. 
Let $T_0(\hat{P}) \subset T(\hat{P})$ denote the subset corresponding to the minimal set of generators of $I_{k[\Oc(P)]}$. 
Note that $k[\Oc(P)]$ is standard graded by the grading $\deg (Z^{v(\hat{0})} \prod_{p \in P}X_p^{v(p)}) = v(\hat{0})$. 

On the Cohen--Macaulay type of $R_m$, we see the following: 
\begin{Lemma}\label{hodai2}
We have $\sharp T_0(\widehat{P_m}) \geq m-1$, i.e., $r(R_m)=\mu(I_{R_m}) \geq m-1$. 
\end{Lemma}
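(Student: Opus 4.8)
The plan is to exhibit $m-1$ explicit strictly order-reversing maps on $\widehat{P_m}$ that are minimal generators of the canonical ideal $I_{R_m}$, i.e. $m-1$ distinct elements of $T_0(\widehat{P_m})$. First I would record the reduction criterion. Since $R_m$ is standard graded, the monoid $S(\widehat{P_m})$ is generated in degree $1$, and a degree-$1$ element $w\in S(\widehat{P_m})$ is given by $w=1$ on $\{\hat{0}\}\cup J$ and $w=0$ elsewhere, for an order ideal (down-set) $J$ of $P_m$ (so $w(\hat{1})=0$). A map $v\in T(\widehat{P_m})$ fails to be a minimal generator exactly when $v-w\in T(\widehat{P_m})$ for some such $w$, and $v-w\in T(\widehat{P_m})$ holds precisely when every covering relation $x\lessdot y$ of $\widehat{P_m}$ with $x\in\{\hat{0}\}\cup J$ and $y\notin\{\hat{0}\}\cup J$ satisfies $v(x)-v(y)\ge 2$. (That testing degree-$1$ subtractions suffices uses that $S(\widehat{P_m})$ is generated in degree $1$ and that adding any element of $S(\widehat{P_m})$ to a strictly order-reversing map keeps it strictly order-reversing.)

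For $j=0,1,\ldots,m-2$ I would define $v^{(j)}\in T(\widehat{P_m})$ by $v^{(j)}(\hat{1})=0$, $v^{(j)}(\hat{0})=m+1+j$, and
\begin{align*}
v^{(j)}(x_{2k-1})=m+1-k,\qquad v^{(j)}(x_{2k})=m+1+j-k\qquad(k=1,\ldots,m),
\end{align*}
so that the odd chain carries the values $m,m-1,\ldots,1$ and the even chain the values $m+j,\ldots,j+1$. A direct check shows $v^{(j)}$ is strictly order-reversing: both chains are strictly decreasing and stay above $v^{(j)}(\hat{1})=0$, and the cross relation $x_1\prec x_{2m}$ requires $v^{(j)}(x_1)=m>j+1=v^{(j)}(x_{2m})$, which holds precisely because $j\le m-2$. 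Since the degrees $v^{(j)}(\hat{0})=m+1+j$ are pairwise distinct, the maps $v^{(0)},\ldots,v^{(m-2)}$ are $m-1$ distinct elements of $T(\widehat{P_m})$ (here $v^{(0)}$ is the unique bottom-degree generator).

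The core step is to prove each $v^{(j)}$ is irreducible. A direct computation shows that under $v^{(j)}$ the odd-chain covers $x_{2k-1}\lessdot x_{2k+1}$, the cover $x_{2m-1}\lessdot\hat{1}$, the even-chain covers $x_{2k}\lessdot x_{2k+2}$, and the cover $\hat{0}\lessdot x_2$ all have gap exactly $1$. Suppose, for contradiction, that $v^{(j)}-w$ is order-reversing for $w$ the indicator of $\{\hat{0}\}\cup J$ with $J$ an order ideal of $P_m$. Since $\hat{1}\notin J$ and $x_{2m-1}\lessdot\hat{1}$ is tight, the gap-$\ge 2$ requirement forces $x_{2m-1}\notin J$; descending the odd chain through its tight covers forces $x_{2k-1}\notin J$ for all $k$, in particular $x_1\notin J$. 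As $J$ is a down-set and $x_1\prec x_{2m}$, it follows that $x_{2m}\notin J$; descending the even chain through its tight covers then forces $x_2\notin J$. But then $\hat{0}\lessdot x_2$ is a tight cover with $\hat{0}\in\{\hat{0}\}\cup J$ and $x_2\notin\{\hat{0}\}\cup J$, contradicting the gap-$\ge 2$ requirement. Hence no admissible $J$ exists, so each $v^{(j)}\in T_0(\widehat{P_m})$ and therefore $\sharp T_0(\widehat{P_m})\ge m-1$. The main difficulty is exactly this irreducibility step: since the two chains are symmetric on their own, the argument must route the forced memberships through the single cross relation $x_1\prec x_{2m}$ in order to link the odd chain to the even chain and close the tight path running from $\hat{1}$ down to $\hat{0}$.
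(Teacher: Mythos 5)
Your proof is correct, and your family $v^{(0)},\ldots,v^{(m-2)}$ coincides exactly with the paper's generators $v_1,\ldots,v_{m-1}$ (your index $j$ is the paper's $i-1$; both put $m,m-1,\ldots,1$ on the odd chain and $m+i-1,\ldots,i$ on the even chain), so the construction is the same. Where you genuinely diverge is the verification of minimality. The paper takes an \emph{arbitrary} decomposition $v_i=v+w$ with $v\in T(\widehat{P_m})$, $w\in S(\widehat{P_m})$ and shows $w=0$ directly: the length of the odd chain forces $v(x_{2q-1})\ge m+1-q=v_i(x_{2q-1})$, hence $w=0$ on the odd chain; a gap argument shows $v$ must drop by exactly one along the even chain; and the cross relation $x_1\prec x_{2m}$ rules out $v(x_{2m})\neq i$, pinning $v=v_i$. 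You instead first reduce to subtracting a \emph{degree-one} element of $S(\widehat{P_m})$, i.e.\ the indicator of $\{\hat 0\}\cup J$ for an order ideal $J$ of $P_m$ --- a reduction that is legitimate precisely because the Hibi ring is standard graded, so the monoid $S(\widehat{P_m})$ is generated in degree one and $T+S\subseteq T$ --- and then run a tight-cover chase: since the covers along the odd chain, along the even chain, $x_{2m-1}\lessdot\hat 1$, and $\hat 0\lessdot x_2$ all have gap exactly one under $v^{(j)}$, the gap-$\ge 2$ requirement forces $J$ to avoid the odd chain (descending from $\hat 1$), then $x_{2m}$ (via the down-set property and $x_1\prec x_{2m}$), then the even chain, and the cover $\hat 0\lessdot x_2$ yields the contradiction. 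Your route buys a clean, purely combinatorial irreducibility criterion and isolates exactly where the single cross relation is used; the paper's route is more self-contained in that it never needs standard gradedness or degree-one generation of the monoid, at the cost of a somewhat more computational case analysis. Both arguments trace the same path $\hat 1 \to$ odd chain $\to x_1\prec x_{2m} \to$ even chain $\to \hat 0$ through the poset, and both are correct.
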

\begin{proof}
For $i=1,\ldots,m-1$, let $v_i : P_m \rightarrow \ZZ_{\geq 0}$ be the map defined by 
\begin{align*}
v_i(x_j)=
\begin{cases}
m+1-\frac{j+1}{2}, \;\; &\text{ if $j$ is odd}, \\
m+i-\frac{j}{2}, \;\; &\text{ if $j$ is even} 
\end{cases}
\end{align*}
for $j=1,\ldots,2m$. Then each $v_i$ is a strictly order-reversing map. 

We will show that $v_i \in T_0(\widehat{P_m})$ for each $i=1,\ldots,m-1$. 
Assume that $v_i$ can be written as a sum of $v \in T(\widehat{P_m})$ and $w \in S(\widehat{P_m})$, 
i.e., $v_i(x)=v(x)+w(x)$ for each $x \in P$. Then we have $v_i(x_j)-v(x_j)=w(x_j) \geq 0$ for each $j$. 
Moreover, since $\hat{1} \succ x_{2m-1} \succ x_{2m-3} \succ \cdots \succ x_1 \succ \hat{0}$, 
$v \in T(\widehat{P_m})$ and $v(\hat{1})=0$, we have $v(x_{2q-1}) \geq m+1-q = v_i(x_{2q-1})$, 
equivalently, $v_i(x_{2q-1})- v(x_{2q-1}) \leq 0$ for each $q=1,2,\ldots,m$. 
Hence, we obtain $v_i(x_j)=v(x_j)$ for each $j=1,3,\ldots,2m-1$. 

On the other hand, if there is $1 \leq r < m$ with $v(x_{2r})-v(x_{2r+2}) \geq 2$, then we see that 
\begin{align*}
w(x_{2r+2})&=v_i(x_{2r+2})-v(x_{2r+2})=(v_i(x_{2r+2})+1)-(v(x_{2r+2})+1) \\
&\geq v_i(x_{2r})-(v(x_{2r}) - 1) > w(x_{2r}), 
\end{align*}
a contradiction to $w \in S(\widehat{P_m})$. Hence, $v(x_{2r})-v(x_{2r+2}) \leq 1$ for each $1 \leq r < m$, 
i.e., $v(x_{2r})-v(x_{2r+2}) = 1$ for each $r$ because $v \in T(\widehat{P_m})$. 
Let $v(x_{2m})=\alpha \in \ZZ_{>0}$. 
\begin{itemize}
\item Suppose that $\alpha>i$. Then we have $w(x_{2m})=v_i(x_{2m})-v(x_{2m})=i-\alpha < 0$, a contradiction. 
\item Suppose that $\alpha<i$. Then we have $w(x_{2m})> 0$. However, as observed above, 
we also have $w(x_1)=v_i(x_1)-v(x_1)=0$. Since $x_1 \prec x_{2m}$ in $P_m$, we should have $w(x_1) \geq w(x_{2m})$, a contradiction. 
\end{itemize}
Thus, $v(x_{2m})=i$. By $v(x_{2r})-v(x_{2r+2}) = 1$ for each $1 \leq r < m$, we also have $v(x_{2r})=m+i-r$. 
Therefore, $v=v_i$ and $w=0$. This implies that $v_i \in T_0(\widehat{P_m})$, as required. 
\end{proof}

\subsection{Proof of Theorem \ref{Hibiring}}
We are now in the position to give a proof of Theorem \ref{Hibiring}. 
Work with the same notation as above. 
Let $C=\cok \phi$, where $\phi : R_m \rightarrow I_{R_m}(-a)$ is an injection, 
and let $(h_0,h_1,\ldots,h_m)$ be the $h$-vector of $R_m$. 

By the above discussions and Lemma \ref{hodai1}, there exists a bijection $\xi_i$ between $\Tc_i$ and $\Tc_{m-i}$. 
In particular, we have $\sharp \Tc_i = \sharp \Tc_{m-i}$. 
On the other hand, by Proposition \ref{h_order_poly}, we have $h_i=\sharp \{\pi \in \Sc^{-1} : d(\pi)=i\}$ 
for each $i=0,1,\ldots,m$. Since we have $\Sc^{-1} = \bigcup_{i=0}^m \Tc_i \cup \{\tau\}$ and $d(\tau)=m-1$, 
we conclude that $(h_0,h_1,\ldots,h_m)$ satisfies that $h_i=h_{m-i}$ for $i=0,2,3,\ldots,\lfloor m/2 \rfloor$ 
and $h_{m-1}=h_1+1$. Therefore, by Proposition \ref{i-}, we obtain that 
$$e(C)=\sum_{j=0}^m ((h_m+\cdots+h_{m-j})-(h_0+\cdots+h_j))=\sum_{j=1}^{m-2}(h_{m-1}-h_1)=m-2.$$

In addition, it follows from Proposition \ref{myu-} and Lemma \ref{hodai2} that 
$\mu(C)=r(R_m)-1 \geq m-2.$ Hence, we see that $e(C) - \mu(C) \leq 0$. 
Since $e(C) - \mu(C) \geq 0$ is also satisfied by \eqref{ookii}, 
we conclude that $C$ satisfies $e(C)=\mu(C)$. Therefore, by Corollary \ref{tokuchou}, 
$R_m$ is almost Gorenstein, as desired. \hfill\owari

\begin{Example}{\em 
Let $Q_m=\{x_1,\ldots,x_{2m}\}$ be the poset with the partial order 
$x_1 \prec x_3 \prec \cdots \prec x_{2m-1}$, $x_2 \prec x_4 \prec \cdots \prec x_{2m}$, $x_1 \prec x_{2m}$ and $x_2 \prec x_{2m-1}$. 
By the similar discussions as above, we see that $k[\Oc(Q_m)]$ is also almost Gorenstein with $r(k[\Oc(Q_m)])=2m-3$. 
}\end{Example}

\bigskip


\begin{thebibliography}{10}

\bibitem{BF}
V. Barucci and R. Froberg, One-dimensional almost Gorenstein rings, 
{\em J. Algebra} {\bf 188} (1997), 418--442.

\bibitem{BH}
W. Bruns and J. Herzog, ``Cohen--Macaulay rings,'' Cambridge University Press, 1993. 

\bibitem{E}
D. Eisenbud, Linear sections of determinantal varieties, {\em Amer. J. Math.} {\bf 110} (1988), 541--575. 

\bibitem{EHHM}
V. Ene, J. Herzog, T. Hibi and S. S. Madani, 
Pseudo-Gorenstein and level Hibi rings, {\em J. Algebra} {\bf 431} (2015), 138--161. 

\bibitem{GMP}
S. Goto, N. Matsuoka and T. T. Phuong, Almost Gorenstein rings, {\em J. Algebra} {\bf 379} (2013), 355--381. 

\bibitem{GTT}
S. Goto, R. Takahashi and N. Taniguchi, 
Almost Gorenstein rings --Towards a theory of higher dimension--, 
{\em J. Pure Appl. Algebra} {\bf 219} (2015), 2666--2712. 

\bibitem{Hibilat}
T. Hibi, Distributive lattices, affine semigroup rings and algebras with straightening laws, 
In:``Commutative Algebra and Combinatorics'' (M. Nagata and H. Matsumura, Eds.), Adv. Stud. 
{\em Pure Math.} {\bf 11}, North--Holland, Amsterdam, (1987), 93--109. 


\bibitem{HibiASL}
T. Hibi, Level rings and algebras with straightening laws, 
{\em J. Algebra} {\bf 117} (1988), 343--362. 

\bibitem{HibiRedBook}
T. Hibi, ``Algebraic Combinatorics on Convex Polytopes,''
Carslaw Publications, Glebe NSW, Australia, 1992.

\bibitem{MaMu}
N. Matsuoka and S. Murai, 
Uniformly Cohen-Macaulay simplicial complexes, arXiv:1405.7438v1. 

\bibitem{Yanagawa}
K. Yanagawa, 
Castelnuovo's Lemma and $h$-vectors of Cohen--Macaulay homogeneous domains, 
{\em J. Pure and Appl. Algebra} {\bf 105} (1995), 107--116. 

\bibitem{StanleyEC}
R. P. Stanley, ``Enumerative Combinatorics, Volume 1,''
Wadsworth \& Brooks/Cole, Monterey, Calif., 1986.

\bibitem{StanleyHF}
R. P. Stanley, Hilbert functions of Graded Algebras, 
{\em Adv. Math.} {\bf 28} (1978), 57--83. 

\bibitem{StanleyCMD}
R.P. Stanley, On the Hilbert function of a graded Cohen-Macaulay domain, 
{\em J. Pure Appl. Algebra} {\bf 73} (1991), 307--314. 

\end{thebibliography}
\end{document}